\newlength\myindent
\renewcommand{\theequation}{\arabic{equation}}
\newcommand{\Rmnum}[1]{\expandafter\@slowromancap\romannumeral #1@}
\newcommand{\inner}[3][]{{\langle #2,#3 \rangle_{#1}}}
\newcommand{\etal}{\textit{et al. }}
\newcommand\numberthis{\addtocounter{equation}{1}\tag{\theequation}}
\newcommand{\whcomm}[2]{{}{#2}}
\newcommand{\whcommsec}[2]{{}{ #2}}
\newcommand{\whcommtrd}[2]{{}{#2}}
\newcommand{\kwcomm}[2]{{}{#2}}
\newcommand{\kw}[1]{{#1}}
\newcommand{\whfirrev}[2]{{}{#2}}
\newcommand{\whsecrev}[2]{{}{#2}}
\newtheorem{definition}{Definition}[section]
\newtheorem{theorem}{Theorem}[section]
\newtheorem{lemma}{Lemma}[section]
\newtheorem{assumption}{Assumption}[section]
\numberwithin{equation}{section}
\renewcommand{\P}{\mathrm{P}}
\DeclareMathOperator{\T}{\mathrm{T}}
\DeclareMathOperator{\Hess}{\mathrm{Hess}}
\DeclareMathOperator{\grad}{\mathrm{grad}}
\DeclareMathOperator{\Prox}{\mathrm{Prox}}
\DeclareMathOperator{\N}{\mathrm{N}}
\DeclareMathOperator{\D}{\mathrm{D}}
\DeclareMathOperator{\trace}{\mathrm{trace}}
\DeclareMathOperator{\diag}{\mathrm{diag}}
\DeclareMathOperator{\St}{\mathrm{St}}
\DeclareMathOperator*{\argmin}{arg\,min}
\def\S{\mathbb{S}}
\def\R{\mathbb{R}}
\def\M{\mathcal{M}}
\begin{document}

\title{An Extension of Fast Iterative Shrinkage-thresholding to Riemannian Optimization for Sparse Principal Component Analysis\footnotetext{Authors  are listed alphabetically, and corresponding authors: Wen Huang (\texttt{wen.huang@xmu.edu.cn})  and Ke Wei (\texttt{kewei@fudan.edu.cn}). WH was partially  supported by the Fundamental Research Funds for the Central Universities (NO. 20720190060) and National Natural Science Foundation of China (NO. 12001455). KW was partially  supported by the NSFC Grant 11801088 and the Shanghai Sailing Program 18YF1401600.}}
\author[1]{Wen Huang}
\author[2]{Ke Wei}

\affil[1]{ School of Mathematical Sciences, Xiamen University, Xiamen, China.\vspace{.15cm}}
\affil[2]{School of Data Science, Fudan University, Shanghai, China.}

\maketitle


\begin{abstract}
{
Sparse principal component analysis (PCA), an important variant of PCA,  attempts to find sparse loading vectors when conducting dimension reduction. This paper considers the \kw{nonsmooth} Riemannian optimization problem associated with the ScoTLASS model \cite{JoTrUd2003a} for sparse PCA which can impose orthogonality and sparsity simultaneously. 
\kw{A Riemannian proximal method is proposed in the work of Chen \etal \cite{CMSZ2019} for the efficient solution of this optimization problem. In this paper, two acceleration schemes are introduced. First  and foremost, we extend the  FISTA  method  from the Euclidean space to the Riemannian manifold to solve sparse PCA, leading to the accelerated Riemannian proximal gradient method. Since the Riemannian optimization problem for sparse PCA  is essentially non-convex, a restarting technique is  adopted to stabilize the  accelerated  method without sacrificing the fast convergence. Second, a diagonal preconditioner is proposed for  the  Riemannian proximal subproblem which can further accelerate the convergence of the Riemannian proximal methods.
Numerical evaluations establish the computational advantages of the proposed methods over the existing proximal gradient methods on a manifold.
Additionally,  a short result concerning the convergence of the Riemannian subgradients of a sequence is established, which, together with the result in the work of Chen \etal \cite{CMSZ2019}, can show the stationary point convergence of the Riemannian proximal methods.
}
}
\end{abstract}

\section{Introduction}\label{sec:int}

Principal component analysis (PCA) is an important data processing technique.
In essence, PCA attempts to find a low dimensional representation of a data set. The low dimensional representation can be subsequently used for data denoising, vision and recognition, just to name a few. However, due to the complexity of data as well as the interpretability issues, vanilla PCA may not be able to meet the requirements of real applications. Therefore,  several variants of PCA have been proposed and studied, one of which is sparse PCA.

Given a dataset, PCA aims to find  linear combinations of the original variables such that the new variables can capture the maximal variance in the data. In order to achieve the maximal variance, PCA tends to use a linear combination of all the variables. Thus, all coefficients (loadings) in the linear combination are typically non-zero, which will cause interpretability issues in many applications. For example, in genome data analysis, each coefficient may correspond to a specific gene, and it is more desirable to have the new variable being composed of only a few genes. This means that the loading vector should have very few non-zero entries.

Let $A$ be an $m\times n$ data matrix, where $m$  denotes the number of samples and $n$ denotes the number of variables. Without loss of generality, assume each column of $A$ has zero mean. Then PCA can be formally expressed as the following maximization problem:
\begin{align*}
\max_{X\in\R^{n\times p}} \|AX\|_{\mathrm{F}}^2 \quad\mbox{subject to}\quad X^T X = I_p,
\numberthis\label{eq:PCA}
\end{align*}
where each column $X$ denotes a loading vector. The PCA problem admits a closed form solution which can be computed via the singular value decomposition (SVD) of the data matrix.  However, it seldom yields a sparse solution; that is, each column of $X$ is very likely to be a dense vector. Alternatively,  sparse PCA attempts to achieve a better trade-off between the variance of $AX$ and the sparsity of $X$. In this paper we consider the following model for  sparse PCA:
\begin{align*}
\min_{X\in\R^{n\times p}}-\|AX\|_{\mathrm{F}}^2+\lambda\|X\|_1 \quad\mbox{subject to}\quad X^T X = I_p,\numberthis\label{eq:sparsePCA}
\end{align*}
where $\|X\|_1= \sum_{i,j}|X_{ij}|$ imposes the sparsity of $X$   and $\lambda>0$ is a tuning parameter controlling the balance between variance and sparsity.

In fact, \eqref{eq:sparsePCA} is a penalized version of the ScoTLASS model proposed by Jolliffe \etal \cite{JoTrUd2003a}, which is inspired by the Lasso regression. In addition to the ScoTLASS model, there are many other formulations for sparse PCA. By rewriting  PCA as a regression optimization problem, Zou \etal \cite{ZoHaTi2006a} propose a model which mixes the ridge regression  and the Lasso regression. A semidefinite programming is proposed in the work of d'Aspremont \etal \cite{AsBaGh2008a,AsGhJoLa2007a} to compute the dominant sparse loading vector. In the work of Shen and Huang\cite{ShHu2008a} and Witten \etal \cite{WiTiHa2009a}, sparse PCA is studied based on matrix decompositions. A formulation similar to \eqref{eq:sparsePCA} but with decoupled variables is investigated in \cite{JoNeRiSe2010a}. Moreover, different algorithms have been developed for different formulations. We refer interested readers to \cite{ZouXue2018a} for a nice overview of sparse PCA on both computational and theoretical results.


Due to the simultaneous existence of the orthogonal constraint and the non-smooth term in \eqref{eq:sparsePCA}, it is quite challenging to develop fast algorithms to compute its solution. In the work of Chen \etal \cite{CMSZ2019},  a Riemannian proximal gradient  method  called ManPG is proposed for this problem. In this paper  we extend the fast iterative shrinkage-thresholding algorithm (FISTA\cite{BecTeb2009fista}) to solve \eqref{eq:sparsePCA}.   For ease of exposition, we consider the following more general nonconvex optimization problem:\footnote{It is often more convenient to use lowercase letters to denote matrices  when presenting the  problem, the algorithms as well as the theoretical results.}
\begin{equation*}
\min F(x) = f(x) + g(x)\quad\mbox{subject to}\quad x\in\M,\numberthis\label{eq:optimization}
\end{equation*}
where $\mathcal{M} \subset \mathbb{R}^{n \times m}$ is a compact Riemannian submanifold, $f: \mathbb{R}^{n \times m} \rightarrow \mathbb{R}$ is $L$-continuously differentiable (may be nonconvex) and $g$ is continuous, convex, but may be nondifferentiable. Clearly, \eqref{eq:sparsePCA} is a special case of \eqref{eq:optimization} with $\mathcal{M}$ being the Stiefel manifold, defined by 
\begin{align*}
\St(p,n) = \{X\in\R^{n\times p}~|~X^TX=I_p\}.\numberthis\label{eq:Stiefel}
\end{align*}

When $F$ is a smooth function (i.e., $g=0$), most of the standard optimization algorithms for the Euclidean setting, for example the (accelerated) gradient method, the Newton method and the BFGS method, and the trust region method, are readily extended to the Riemannian setting; see  the work\cite{AMS2008,HUANG2013,Bart2010,Boumal2014,Mishra2014} and references therein.  

There have also been many algorithms that are designed for the nonsmooth optimization problems on manifold. 
In the work of Ferreira and Oliveira\cite{FerOli1998}, a subgradient method is studied for minimizing a convex function on a Riemannian manifold and convergence guarantee is established for the diminishing stepsizes.
In the paper~\cite{ZS2016}, Zhang and Sra analyze a Riemannian subgradient-based  method and show that the cost function decreases to the optimal value at the rate of $O(1/\sqrt{k})$.
 When the cost function is Lipschitz continuous, the $\epsilon$-subgradient method is a variant of the subgradient method which utilizes the gradient at nearby points as an approximation of the subgradient at a given point.
In the papers~\cite{GH2015a,GH2015b}, Grohs and Hosseini develop two $\epsilon$-subgradient-based optimization methods using line search strategy and trust region strategy, respectively. The 
convergence of the algorithms to critical points is established in their work.
Huang~\cite{HUANG2013} generalizes a gradient sampling method to the Riemannian setting, which is very  efficient for small-scale problems but lacks convergence analysis. In the paper~\cite{HU2017}, Hosseini and Uschmajew present a Riemannian gradient sampling method with convergence analysis. Recently, Hosseini \etal \cite{HHY2018} propose a new Riemannian line search  method by combining the $\epsilon$-subgradient method and the quasi-Newton ideas. The proximal point method has also been extended to the Riemannian setting. For instance, Ferreira and Oliveira propose a Riemannian proximal point method~\cite{FO2002}. The $O(1/k)$ convergence rate of the method for the Hadamard manifold is established by Bento \etal~\cite{BFM2017}. The shortcoming of the Riemannian proximal point method is that there do not exist efficient algorithms for the subproblems.

While some of the aforementioned algorithms are also applicable for the nonsmooth optimization problem \eqref{eq:optimization}, they lack the ability of exploiting the decomposable structure of the cost function. In contrast, a 
proximal gradient method  (ManPG) is proposed in the work of Chen \etal \cite{CMSZ2019} \whfirrev{}{when $\mathcal{M}$ is the Stiefel manifold in \eqref{eq:optimization}}, 
which is an analogue of the proximal gradient method in the Euclidean setting and hence is able to take advantage of the problem structure. \whfirrev{}{Moreover, Riemannian proximal methods for composite problems on general manifolds are developed in the work of the same authors \cite{HuaWei2019b} based on a different Riemannian proximal mapping. As suggested in that work, for optimization problems on Stiefel manifold (the focus of this paper), the solution to  the Riemannian proximal mapping used in the work of Chen \etal~\cite{CMSZ2019} and this paper can be solved more efficiently. }

\kw{\em The main contributions of this paper are summarized as follows.}
 We first extend the accelerated proximal gradient method (specifically FISTA \cite{BecTeb2009fista}) to the Riemannian setting to solve \eqref{eq:optimization}. The algorithm is coined as AManPG (accelerated ManPG ). A simple safeguard is introduced in AManPG so that its convergence to stationary points can be guaranteed.
 Empirical comparisons clearly show that as in the Euclidean case AManPG exhibits a faster convergence rate than ManPG.  Moreover, a weighted proximal subproblem is considered  in this paper and we observe that a  computationally efficient weight in the diagonal form can further speed up the Riemannian proximal gradient methods. 
\kw{It has been shown in the work \cite{CMSZ2019}  that the search direction computed in ManPG converges to zero. In this paper a complementary result about the convergence of the Riemannian subgradients of a sequence is provided, which can be used to complete the stationary point analysis of the Riemannian proximal methods together with the  result in the work \cite{CMSZ2019}.} 

The remainder of this paper is organized as follows. In Section~\ref{sec:pre}, we give some basic facts about Riemannian manifolds and Riemannian optimization. The accelerated Riemannian proximal gradient method (i.e., AManPG)
is presented in Section~\ref{sec:alg} together with the preliminary convergence analysis. Empirical performance evaluations are presented in \ref{sec:num}, while Section~\ref{sec:con} concludes this paper with a few future directions.

\section{Preliminaries on Manifold}\label{sec:pre}
This section reviews some basic notation on Riemannian manifold that is closely related to the work in this paper. We focus on submanifolds of Euclidean spaces with $\St(p,n)$ as an example since in this case the manifold is geometrically more intuitive and can be  imagined as a smooth surface in a 3D space. Interested readers are referred to the book\cite{AMS2008} for more details about Riemannian manifolds and Riemannian optimization.

Assume $\M$ is a smooth submanifold of a Euclidean space and let $x\in\M$. The tangent space of $\M$ at $x$, denoted $\T_x\M$, is a collection of  derivatives  of all the smooth curves passing through $x$,
\begin{align*}
\T_x\M=\{\gamma'(0)~|~\gamma(t)\mbox{ is a curve in }\M\mbox{ with }\gamma(0)=x\}.
\end{align*}
The tangent space is a vector space and each tangent vector in $\T_x\M$ corresponds to a linear mapping from the set of smooth real-valued functions in a neibourghood of $x$ to $\R$. Indeed, it is the latter property that is adopted to define  tangent spaces for abstract manifolds. Since $\T_x\M$ is a vector space, we can equip it with an inner product (or metric) $g_x(\cdot,\cdot):\T_x\M\times \T_x\M\rightarrow\R$; see Figure~\ref{figure5} (left) for an illustration. A manifold whose tangent spaces are endowed with a smoothly varying metric is referred to as a Riemannian manifold. For  a smooth function $f$ defined a Riemannian manifold, the Riemannian gradient of $f$ at $x$, denoted $\grad f(x)$, is the unique tangent vector such that $g_x(\grad f(x),\eta_x)=\D f(x)[\eta_x],~\forall \eta_x\in \T_x\M$, where $\D f(x)[\eta_x]$ is the directional derivative of $f$ along the direction $\eta_x$.
Moreover, the Riemannian gradient of  $f$ at $x$ is simply the orthogonal projection of $\nabla f(x)$ onto $\T_x\M$; that is, 
\begin{align*}
\grad f(x)=\P_{\T_x\M}\nabla f(x),\numberthis\label{eq:grad}
\end{align*}
where $\nabla f(x)$ is the Euclidean gradient of $f$ at $x$.

When we construct the diagonal weight for the proximal subproblem in the algorithm, the second order information of a function on the Riemannian manifold will also be needed. The Riemannian Hessian of $f$ at $x$, denoted $\Hess f(x)$, is a mapping from $\T_x\M$ to $\T_x\M$. Moreover, when $\M$ is a Riemannian submanifold of a Euclidean space $\Hess f(x)$ satisfies 
\begin{align*}
\Hess f(x)[\eta_x] =\P_{\T_x\M}\D\grad f(x)[\eta_x],\quad \eta_x\in\T_x\M,\numberthis\label{eq:hessian}
\end{align*} 
where $\D\grad f(x)[\eta_x]$ denotes the directional derivative of $\grad f(x)$ along the direction $\eta_x$.

\begin{figure}
\centering
\scalebox{0.5}{
  \setlength{\unitlength}{1bp}%
  \begin{picture}(356.48, 187.96)(0,0)
  \put(0,0){\includegraphics{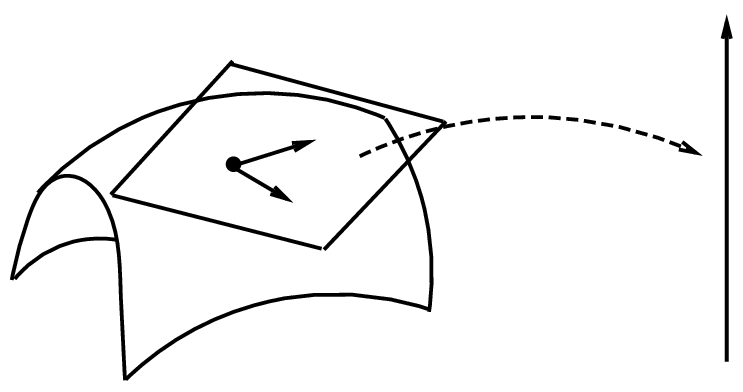}}
  \put(185.88,53.02){\fontsize{14.23}{17.07}\selectfont $\mathcal{M}$}
  \put(96,95){\fontsize{14.23}{17.07}\selectfont $x$}
  \put(146.20,125.96){\fontsize{14.23}{17.07}\selectfont $\xi_x$}
  \put(143.22,88.85){\fontsize{14.23}{17.07}\selectfont $\eta_x$}
  \put(331.34,171.18){\fontsize{14.23}{17.07}\selectfont $\mathbb{R}$}
  \put(231.59,140.89){\fontsize{14.23}{17.07}\selectfont $g_x(\eta_x,\xi_x)$}
  \put(118.55,161.37){\fontsize{14.23}{17.07}\selectfont $\mathrm{T}_x \mathcal{M}$}
  \end{picture}%
  }
  \;\;\;\;\;\;\;\;\;\;\;\;  \;\;\;\;\;\;\;\;\;\;\;\;
\scalebox{0.5}{
  \setlength{\unitlength}{1bp}%
  \begin{picture}(219.70, 178.15)(0,0)
  \put(0,0){\includegraphics{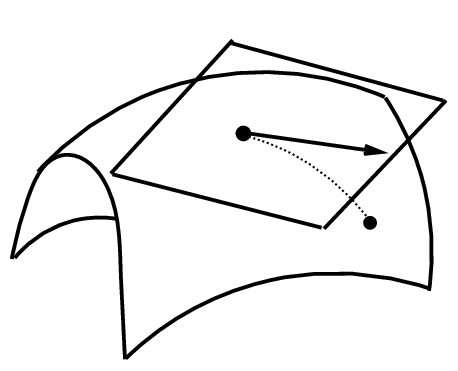}}
  \put(74.18,33.70){\fontsize{14.23}{17.07}\selectfont $\mathcal{M}$}
  \put(102.98,103.21){\fontsize{14.23}{17.07}\selectfont $x$}
  \put(169.61,112.5){\fontsize{14.23}{17.07}\selectfont $\eta_x$}
  \put(118.55,161.37){\fontsize{14.23}{17.07}\selectfont $\mathrm{T}_x \mathcal{M}$}
  \put(163.50,56.51){\fontsize{14.23}{17.07}\selectfont $R_x(\eta_x)$}
  \end{picture}%
  }
  \caption{(Left) Riemannian metric; (Right) Retraction.}
  \label{figure5}
\end{figure}

Regarding the Stiefel manifold, the tangent space of $\St(p,n)$ at a matrix $X$ is given by 
\begin{align*}
\T_X\St(p,n) =\{\eta_X\in\R^{n\times p}~|~X^T\eta_X+\eta_X^TX=0\}.\numberthis\label{eq:tangent}
\end{align*}
In particular, when $p=1$, $\St(p,n)$ is the unit sphere $\S^{n-1}$ in $\R^n$ and $\T_x\S^{n-1}$ consists of those vectors that are perpendicular to $x$. We can use the inner product inherited from $\R^{n\times p}$ as the Riemannian metric on $\T_X\St(p,n)$; that is,
\begin{align*}
g_X(\xi_X,\eta_X) = \trace(\xi_X^T\eta_X),\quad\forall~ \xi_X,\eta_X\in \T_X\St(p,n).
\end{align*}
Under this metric, the projection of any $n\times p$ matrix $\xi$ onto $\T_X\St(p,n)$
is given by
\begin{align*}
\P_{\T_X\St(p,n)} = \xi - X\mbox{sym}(X^T\xi),\quad\mbox{where }\mbox{sym}(X^T\xi)=\frac{X^T\xi+\xi^TX}{2}.\numberthis\label{eq:proj}
\end{align*}

A Riemannian optimization algorithm typically conducts a line search or solves  a linear system or a model problem on a tangent space, and then moves the solution back to the manifold.  The notion of retraction plays a key role in mapping vectors in a tangent space to points on a manifold.
\begin{definition}[Retraction]\label{def:retraction} At $x\in\M$, a retraction $R_x(\cdot)$ is a smooth mapping from $\T_x\M$ to $\M$ which satisfies the following two properties:
1) $R_x(0_x)=x$, where $0_x$ is the zero element in $\T_x\M$;
2) $\left.\frac{d}{dt}R_x(t\eta_x)\right|_{t=0} = \xi_x$ for any $\xi_x\in\T_x\M$.
\end{definition}
The second property  means the velocity of the curve defined by $R_x(t\eta_x)$ is equal to $\eta_x$ at $t=0$; see Figure~\ref{figure5} (right). Roughly speaking, retraction plays the role of line search when designing a Riemannian optimization algorithm; namely,
\begin{align*}
\mbox{(Euclidean) }x_{k+1} = x_k+ \eta_{x_k}\quad\Rightarrow\quad \mbox{(Riemannian) }x_{k+1} = R_{x_k}(\eta_{x_k}).\numberthis\label{eq:substitution}
\end{align*}
Note that the  two properties in Definition~\ref{def:retraction} cannot uniquely determine a retraction.  For the Stiefel manifold, several retractions can be constructed, for example those based on the exponential map, the QR factorization, the singular value decomposition (SVD) or the  polar decomposition \cite{AMS2008}. In this paper we use the one based on the SVD:
\begin{align*}
R_X(\eta_X) = UV^T,\quad\mbox{where }X+\eta_X = U\Sigma V^T\mbox{ is the SVD of } X+\eta_X.
\end{align*}
Noticing that $X\in\St(p,n)$ and $\eta_X\in\T_X\St(p,n)$, thus $X+\eta_X$ is a matrix of full column rank. Then it is not hard to verify that the retraction based on the SVD is equivalent to the retraction based on the polar decomposition given by
\begin{align*}
R_X(\eta_X) = (X+\eta_X)(I_p+\eta_X^T\eta_X)^{-1/2}.\numberthis\label{eq:polar}
\end{align*}
Since $X+\eta_X$ is a tall matrix, \kwcomm{}{an alternative way to compute $R_X(\eta_X)$ is as follows:}
\begin{align*}
[Q,R] = \mbox{qr}(X+\eta_X),~[\tilde{U},\tilde{S},\tilde{V}] = \mbox{svd}(R),~R_X(\eta_X)=Q(\tilde{U}\tilde{V}^T),\numberthis\label{eq:computeRetraction}
\end{align*}
where $\mbox{qr}$ and $\mbox{svd}$ means computing the compact QR decomposition and SVD of a matrix, respectively. \whcommsec{[In C++, I use $[U, S, V] = svd(X + \eta_X)$ and $R_X(\eta_X) = U V^T$ since there is basically no time difference in C++ implementation.]}{} \kwcomm{[Done by rephrasing the statement a little bit.]}{}

When the cost function $F$ of  the Riemannian optimization problem is smooth, the first order optimality condition is 
$$
\grad F(x) =0.
$$
If $F$ is not differentiable but Lipschitz continuous, then the Riemannian version of generalized Clarke subdifferential introduced in Hosseini \etal \cite{HoPo2011,HHY2018} is used.  
Specifically, since $\hat{F}_x = F \circ R_x$ is a Lipschitz continuous function defined on a Hilbert space $\T_x \mathcal{M}$, the  generalized  Clarke directional derivative at $\eta_x \in \T_x \mathcal{M}$, denoted by $\hat{F}_x^\circ(\eta_x; v)$, is defined by $$\hat{F}_x^\circ(\eta_x; v) = \lim_{\xi_x \rightarrow \eta_x} \sup_{t \downarrow 0} \frac{\hat{F}_x(\xi_x + t v) - \hat{F}_x(\xi_x)}{t},$$ where $v \in \T_x \mathcal{M}$. The generalized \whfirrev{}{Clarke} subdifferential of $\hat{F}_x$ at $\eta_x$, denoted by $\partial \hat{F}_x(\eta_x)$, is defined by $$\partial \hat{F}_x(\eta_x) = \{\eta_x \in \T_x \mathcal{M} \mid \inner[x]{\eta_x}{v} \leq \hat{F}_x^\circ(\eta_x; v) \hbox{ for all } v \in \T_x \mathcal{M}\}.$$  
 The Riemannian version of generalized Clarke subdifferential of $F$ at $x$, denoted  $\hat{\partial} F(x)$, is defined as $\hat{\partial} F(x) = \partial \hat{F}_x(0_x)$. Any tangent vector $\xi_x \in \hat{\partial} F(x)$ is called a subgradient of $F$ at $x$.
For the cost function  in \eqref{eq:optimization} (or a class of regular functions in general), the generalized Clarke subdifferential is given by  \cite{YaZhSo2014} 
$$\hat{\partial} F(x)=\P_{\T_X\M}\partial F(x),$$
where $\partial F(x)$ denotes the subdifferential in the Euclidean space. Moreover, the first order optimality of the problem \eqref{eq:optimization} is given by $$0\in\hat{\partial}F(x)=\grad f(x)+\P_{\T_X\M}\partial g(x).$$
We refer the reader to the work of Yang \etal \cite{YaZhSo2014} for more details.

\section{Extending FISTA to  Riemannian Optimization}\label{sec:alg}


Before presenting the algorithm for \eqref{eq:optimization}, let us first briefly review the proximal gradient method  and accelerated proximal gradient method  for the optimization problem similar to \eqref{eq:optimization} but with the manifold constraint $x\in\M$ being dropped. In each iteration, the proximal gradient  method updates the estimate of the minimizer via\footnote{Here we write the subproblem in terms of the search direction for ease of extension to the manifold situation, but the update rule is the same as  $x_{k+1}=\argmin_x\langle\nabla f(x_k),x-x_k\rangle+\frac{1}{2\mu}\|x-x_k\|_{\mathrm{F}}^2+g(x)$.}
\begin{align*}
\begin{cases}\eta_{x_k} = \argmin_{\eta\in\R^{n\times p}}\langle\nabla f(x_k),\eta\rangle+\frac{1}{2\mu}\|\eta\|_{\mathrm{F}}^2+g(x_k+\eta)\\
x_{k+1}=x_k+\eta_{x_k},
\end{cases}\numberthis\label{eq:PG}
\end{align*}
where $\|\eta\|_{\mathrm{F}}$ denotes the Frobenius norm of $\eta$.
\whcomm{[We may add a subscript $F$ to $\|\cdot\|$. Otherwise, reader may think it denotes a spectral norm.]}{}In many practical settings, the proximal mapping either has a closed-form solution or can be solved efficiently. Thus,
the algorithm has low per iteration cost and    is applicable for large-scale problems. Furthermore, under the assumptions that $f$ is convex, Lipschitz-continuously differentiable with Lipschitz constant $L$, $g$ is convex, and $F$ is coercive, the proximal gradient method converges  on the order of $O(1/k)$~\cite{BecTeb2009fista,Beck2017}.
Note that the convergence rate of the proximal gradient method is not optimal and algorithms achieving the optimal $O(1/k^2)$ \cite{Darzentas1983Problem,Nesterov83} convergence rate can be developed based on certain acceleration schemes.
In the paper\cite{BecTeb2009fista}, Beck and Teboulle present an accelerated proximal gradient method (known as FISTA) based on the Nesterov momentum technique. The algorithm consists of the following steps
\begin{align*}
\begin{cases}
 \eta_{y_k} = \argmin_{\eta\in\R^{n\times p}}\langle \nabla f(y_k),\eta\rangle+\frac{1}{2 \mu}\|\eta\|_{\mathrm{F}}^2+g(y_k+\eta)\\
 x_{k+1} = y_k+\eta_{y_k}\\
 t_{k+1} = \frac{\sqrt{4t_k^2+1}+1}{2}\\
 y_{k+1} = x_{k+1}+\frac{t_{k}-1}{t_{k+1}}(x_{k+1}-x_k).
 \end{cases}\numberthis\label{eq:APG}
\end{align*}
Under the same conditions as in the convergence analysis of the proximal gradient method,  FISTA
been proven to converge on the order of $O(1/k^2)$ \cite{BecTeb2009fista}.

In the work of Chen \etal \cite{CMSZ2019}, the Manifold Proximal Gradient method (ManPG)  is proposed to solve \eqref{eq:optimization}. The structure of the algorithm is overall  is similar to \eqref{eq:PG}, except that a subproblem  constrained to the tangent space is solved. More precisely, the following constrained optimization problem is first solved to compute the search direction,
\begin{align*}
\eta_{x_k} = \argmin_{\eta\in\T_{x_k}\M}\langle\grad f(x_k),\eta\rangle+\frac{1}{2\mu}\|\eta\|_{\whsecrev{}{W_{x_k}}}^2+g(x_k+\eta)
,\numberthis\label{eq:RPG}
\end{align*}
{where $\|\eta\|_{\whsecrev{}{W_{x}}}^2=\langle\eta,\whsecrev{}{W_{x}}\eta\rangle$ with $\whsecrev{}{W_{x}}:\T_x\M\rightarrow\T_x\M$ being a symmetric, positive definite linear operator.} Here we describe the proximal subproblem in a more general form by introducing a weight operator. As will be seen in the simulations, a simple diagonal weight that is computed adaptively can help improve the convergence of the algorithms. It is trivial that when $\whsecrev{}{W_{x}}$ is an identity operator,  \eqref{eq:RPG} reduces to the standard proximal subproblem considered in the work of Chen \etal \cite{CMSZ2019}.
After the search direction is found, a new estimate is then computed via backtracking and retraction. Since $g(x)$ is a convex function and $\T_x\M$ is a linear subspace, \eqref{eq:RPG} is indeed a convex programming. Thus there are computationally efficient algorithms for this problem. We will return to this issue later in Section~\ref{sec:SSN}.

The global convergence of the algorithm has been established in the work of Chen \etal \cite{CMSZ2019}. More precisely, the authors show
that the norm of the search direction computed from the Riemannian proximal mapping goes to
zero. In addition, if there exists a point such that the search direction from this point vanishes, then this point must be a critical point. 

\whcomm{[Note that for this sparse PCA problem, the Lipschitz constant $L$ is easy to estimate and the estimation works well in practice. However, it may not be the case for other problems. Add an adaptive scheme for $\mu$? But it is fine to me not to add such scheme since we focus on a particular problem. (Same for $N$.)]}{}

\whcomm{[Change the name from ARPG to AManPG.]}{}
\begin{algorithm}[ht!]
\caption{Accelerated Manifold Proximal Gradient Method (AManPG)}
\label{alg:ARPG}
\begin{algorithmic}[1]
\Require Lipschitz constant $L$ on $\nabla f$, parameter $\mu \in (0, 1 / L]$ in the proximal mapping, line search parameter $\sigma \in (0, 1)$, shrinking parameter in line search $\nu \in (0, 1)$, positive integer $N$ for safeguard;
\State $t_0 = 1$, $y_{0} = x_0$, $z_0 = x_0$;
\For {$k = 0, \ldots$}
\If {$\mod(k, N) = 0$} \Comment{Invoke safeguard every $N$ iterations}
\State Invoke Algorithm~\ref{alg:Safeguard}: $[z_{k + N}, x_k, y_k, t_k] = Alg\ref{alg:Safeguard}(z_{k}, x_k, y_k, t_k, F(x_k))$;
\EndIf
\State Compute
\begin{equation*}
\eta_{y_k} = \argmin_{\eta \in \T_{y_k} \mathcal{M}} \inner[]{\grad f(y_k)}{\eta} + \frac{1}{2 \mu} \|\eta\|_{\whsecrev{}{W_{y_k}}}^2 + g(y_k + \eta);
\end{equation*}
\State $x_{k+1} = R_{y_k}(\eta_{y_k})$;
\State $t_{k + 1} = \frac{\sqrt{4 t_k^2 + 1} + 1}{2}$;
\State Compute
$$
y_{k + 1} = R_{x_{k + 1}}\left( \frac{1 - t_k}{t_{k+1}} R_{x_{k+1}}^{-1}(x_k) \right);
$$
\EndFor
\end{algorithmic}
\end{algorithm}

\begin{algorithm}[ht!]
\caption{Safeguard for Algorithm 1}
\label{alg:Safeguard}
\begin{algorithmic}[1]
\Require $(z_{k}, x_k, y_k, t_k, F(x_k))$;
\Ensure $[z_{k + N}, x_k, y_k, t_k]$;
\State \label{alg:Safeguard:st1} Compute
\begin{equation*}
\eta_{z_k} = \argmin_{\eta \in \T_{z_k} \mathcal{M}} \inner[]{\grad f(z_k)}{\eta} + \frac{1}{2 \mu} \|\eta\|_{\whsecrev{}{W_{z_k}}}^2 + g(z_k + \eta);
\end{equation*}
\State Set $\alpha = 1$;
\While {$F(R_{z_k}(\alpha\eta_{z_k})) > F(z_k) - \sigma \alpha {\|\eta_{z_k}\|_{\mathrm{F}}^2}$ \kwcomm{[Double check: Should be F norm here]} } \whcommsec{[Yes, I used F norm.]}{}  \label{alg:Safeguard:st2}
\State $\alpha = \nu \alpha$;
\EndWhile
\If {$F(R_{z_k}(\alpha\eta_{z_k})) < F(x_k)$} \Comment{Safeguard takes effect} \label{alg:Safeguard:st3}
\State $x_k = R_{z_k}(\alpha\eta_{z_k})$, $y_k = R_{z_k}(\alpha\eta_{z_k})$, and $t_k=1$;
\Else
\State $x_k$, $y_k$ and $t_k$ keep unchanged;
\EndIf \label{alg:Safeguard:st4}
\State $z_{k+N} = x_k$; \Comment{Update the compared iterate;}
\end{algorithmic}
\end{algorithm}

Inspired by the works \cite{CMSZ2019,BecTeb2009fista},  the goal of this paper is to extend FISTA to the Riemannian setting for the  optimization problem \eqref{eq:optimization}. The algorithm, dubbed Accelerated Manifold Proximal Gradient method (AManPG), is presented in Algorithm~\ref{alg:ARPG}. 
According to the substitution rule provided in \eqref{eq:substitution}, the second line of \eqref{eq:APG} can be replaced by $R_{y_k}(\eta_{y_k})$, giving the 7th step of Algorithm~\ref{alg:ARPG}.  Moreover, the 9th step in Algorithm~\ref{alg:ARPG} is obtained through the following replacement:
\begin{align*}
y_{k+1} = \underbrace{x_{k+1}+\frac{1-t_k}{t_{k+1}}\underbrace{(x_k-x_{k+1})}_{\mbox{replaced by }R_{x_{k+1}}^{-1}(x_k)}}_{\mbox{replaced by }R_{x_{k+1}}\left(\frac{1-t_k}{t_{k+1}} R_{x_{k+1}}^{-1}(x_k)\right)},
\end{align*}
where the first replacement guarantees that $R_{x_{k+1}}^{-1}(x_k)$ is a tangent vector in $\T_{x_{k+1}}\M$. 

Furthermore, since we are dealing with a non-convex optimization problem, the convergence of the Riemannian version of \eqref{eq:APG} is not guaranteed, even for the convergence to a stationary point as the function value of the iterate does not monotonically decrease. \whcomm{[an accumulation point or a stationary point?]}{} Therefore, a safeguard strategy \kw{via restarting} is introduced in Algorithm~\ref{alg:ARPG} to monitor the progress of the algorithm in every $N$ iterations. Whenever the safeguard rule is violated, the algorithm will  be restarted. \kw{It is worth noting that the idea of restarting has also been used in the Euclidean setting to suppresses the oscillatory behaviour of the accelerated proximal gradient methods, see for example the work of O'Donoghue\cite{DC2015AdapFISTA}.}

When we apply Algorithm~\ref{alg:ARPG} to the sparse PCA problem \eqref{eq:sparsePCA}, the computation of the retraction is already given in \eqref{eq:computeRetraction}.  
To compute the inverse of the retraction we first note that  $R_X^{-1}(Y)$ exists when $Y$ is not far from $X$ owing to the local diffeomorphism property of retraction. Letting $\eta_X=R_X^{-1}(Y)$, by \eqref{eq:polar}, we have $\eta_X = YS-X$ for $S=(I_p+\eta_X^T\eta_X)^{1/2}$. Combining the fact $\eta_X\in\T_X\St(p,n)$ and \eqref{eq:tangent} yields
\begin{align*}
(X^TY)S+S(Y^TX) = 2I_p.\numberthis\label{eq:lyap}
\end{align*}
{This is a Lyapunov equation which can be computed by the Bartels-Stewart algorithm using $O(p^3)$ flops \cite{BaSt1972a}. } Once $S$ is computed from \eqref{eq:lyap}, inserting it back into $\eta_X = YS-X$ gives $R_X^{-1}(Y)$.
It is worth noting that the additional computational cost incurred by the Lyapunov equation is marginal since it is very typical that $p\ll n$ in the sparse PCA problem.
\subsection{Computing the diagonal weight}\label{sec:weight}
In this paper we will restrict our attention to the diagonal weight for two reasons. Firstly, it is easy to compute for the sparse PCA problem. Secondly, the proximal subproblem \eqref{eq:RPG} with a diagonal weight can be solved as efficiently as that without a weight.

Roughly speaking, we will extract a diagonal weight from the expression of the Riemannian Hessian of $f$ in each iteration. In particular, when applying the Riemannian proximal gradient methods (including ManPG and AManPG) to the sparse PCA problem \eqref{eq:sparsePCA}, a diagonal weight can be computed in the following way. Noting $f(X)=-\|AX\|_F^2$ in \eqref{eq:sparsePCA}, by \eqref{eq:grad} and \eqref{eq:proj}, we have 
\begin{align*}
\grad f(X) &= \P_{\T_X\M}(-2A^TAX)\\
&=-2A^TAX +2X(X^TA^TAX).
\end{align*}
It follows that 
\begin{align*}
\D\grad f(X)[\eta_X]& = -2A^TA\eta_X +2\eta_X(X^TA^TAX)\\
&+2X(\eta_X^TA^TAX+X^TA^TA\eta_X),\quad\forall \eta_X\in\T_X\St(p,n).
\end{align*}
Noting that 
$
\P_{\T_X\St(p,n)}(X(\eta^TA^TAX+X^TA^TA\eta)) =0,
$
it follows from \eqref{eq:hessian} that
\begin{align*}
\Hess f(X)[\eta_X]=\P_{\T_X\St(p,n)}(-2A^TA\eta_X +2\eta_X(X^TA^TAX)).
\end{align*}
In the Riemannian Newton's method, the weight operator should be chosen in a way such that
\begin{align*}
\langle \eta_X,W\eta_X\rangle=\langle\eta_X,\Hess f(X)[\eta_X]\rangle =\langle\eta_X,-2A^TA\eta_X +2\eta_X(X^TA^TAX)\rangle,
\end{align*}
where the second equality follows from the fact $\eta_X\in \T_X\St(p,n)$. After vectorization we 
can rewrite the third inner product as 
\begin{align*}
\langle\eta_X,-2A^TA\eta_X +2\eta_X(X^TA^TAX)\rangle =\langle \mathrm{vec}(\eta_X),J\mathrm{vec}(\eta_X)\rangle,
\end{align*}
where $J$ is an $np\times np$ matrix given by 
\begin{align*}
J = -2I_p\otimes (A^TA)+2(X^TA^TAX)\otimes I_n.
\end{align*}
\whcommsec{[Add discussion about an efficient way to compute the diagonal entries.]}{} \kwcomm{[To avoid something like {\tt repmat}, only the formula in the matrix form is given from which it should be easy to see the efficient implementation]}{} \whcommsec{[Fine]}{} Since a diagonal weight is sought here, a natural choice is to set $W$ to be the diagonal part of $J$, given by 
\begin{align*}
\diag(J) = -2(D_1-D_2),
\end{align*}
where
\begin{align*}
D_1=\begin{bmatrix}
\diag(A^TA)\\
&\diag(A^TA)\\
& & \ddots\\
&&&\diag(A^TA)
\end{bmatrix}
\end{align*} 
and
\begin{align*}
D_2=
\begin{bmatrix}
(X^TA^TAX)_{11}I_n\\
&(X^TA^TAX)_{22}I_n\\
&&\ddots\\
&&&(X^TA^TAX)_{pp}I_n
\end{bmatrix}.
\end{align*}
Furthermore, in order to make sure  $W$ is positive definite, we use the following  modification in \eqref{eq:RPG},
\begin{align*}
W =  \max\{\diag(J), \tau I_{np}\},\numberthis\label{eq:weight}
\end{align*}
where $\tau>0$ is a tuning parameter. \whcommsec{[What I am using now is $W = \max(\diag(J), \tau I_{np})$.]}{} \kwcomm{[Done.]}{}
\subsection{Outline of the semi-smooth Newton method for \eqref{eq:RPG}} \label{sec:SSN}
As suggested in the work\cite{CMSZ2019}, the proximal subproblem can be solved efficiently by the semi-smooth Newton method. To keep the presentation self-contained, this section outlines the key ingredients for  applying the semi-smooth Newton method to solve \eqref{eq:RPG}. Interested readers can find more details about the semi-smooth Newton method in the work \cite{CMSZ2019,XiLiWeZh2018a,LiSunToh18} and references therein. Overall, semi-smooth Newton method is about solving a system of nonlinear equations based on the notion of the generalized Jacobian. Thus to apply the semi-smooth Newton method, we need to reformulate an optimization problem as a system of nonlinear equations. This   can usually be achieved by considering the KKT conditions or the fixed point mappings.

Considering the sparse PCA problem \eqref{eq:sparsePCA}, we can first rewrite the Riemannian proximal subproblem \eqref{eq:RPG} as 
\begin{align*}
\eta^* = \argmin_\eta\langle \grad f(X),\eta\rangle+\frac{1}{2\mu}\langle \eta,W\eta\rangle+g(X+\eta)\quad\mbox{subject to}\quad \eta\in\T_X\St(p,n),\numberthis\label{eq:subStiefel}
\end{align*}
where we omit the subscripts for conciseness.  As in the work \cite{CMSZ2019}, let $\mathcal{A}:\R^{n\times p}\rightarrow\R^{p\times p}$ be a linear operator defined by $\mathcal{A}(\eta)=X^T\eta+\eta^TX$. Noting the expression of $\T_X\St(p,n)$ in \eqref{eq:tangent}, it is not hard to  see that the KKT condition for \eqref{eq:subStiefel} is given by 
\begin{align*}
\begin{cases}
\partial_\eta \mathcal{L}(\eta,\lambda)=0\\
\mathcal{A}(\eta)=0,
\end{cases}\numberthis\label{eq:KKT}
\end{align*}
where $\mathcal{L}(\eta,\lambda)$ the \whcommsec{[This is a Lagrangian function, not augmented Lagrangian, right?]augmented Lagrangian}{Lagrangian function} associated with \eqref{eq:subStiefel},
\begin{align*}
\mathcal{L}(\eta,\lambda)=\langle \grad f(X),\eta\rangle+\frac{1}{2\mu}\langle \eta,W\eta\rangle+g(X+\eta)-\langle\lambda,\mathcal{A}(\eta)\rangle.\numberthis\label{eq:lagrangian}
\end{align*}
From the first equation of \eqref{eq:KKT}, we have
\begin{align*}
\eta= \Prox_{ug}^{W}\left(X-\mu W^{-1}(\grad f(X)-\mathcal{A}^*\lambda)\right)-X,
\numberthis\label{eq:lam2eta}
\end{align*}
where 
\begin{align*}
 \Prox_{ug}^{W}(Z) = \argmin_{V\in\R^{n\times p}} \frac{1}{2}\|V-Z\|_{W}^2+\mu g(V)\numberthis\label{eq:scaledProx}
\end{align*}
denotes the scaled proximal mapping \cite{LeeSunSau14}, and $\mathcal{A}^*$ denotes the adjoint of $\mathcal{A}$. Substituting \eqref{eq:lam2eta} into the second equation of \eqref{eq:KKT} yields that 
\begin{align*}
\Psi(\lambda):=\mathcal{A}\left( \Prox_{ug}^{W}\left(X-\mu W^{-1}(\grad f(X)-\mathcal{A}^*\lambda)\right)-X\right)=0,\numberthis\label{eq:noneq}
\end{align*}
which is a system of nonlinear equations with respect to $\lambda$. Thus, to compute the solution to the proximal subproblem \eqref{eq:subStiefel}, we can first find the root of the nonlinear system \eqref{eq:noneq} and then substitute it back to \eqref{eq:lam2eta} to obtain $\eta^*$.

When $W$ is a diagonal weight operator, the nonlinear system \eqref{eq:noneq} can be solved efficiently by the semi-smooth Newton method. Let $\lambda_k$ be the current estimate of the solution to \eqref{eq:noneq}. As in the Newton method, the key step in the semi-smooth Newton method is to compute a search direction by solving the following linear system 
\begin{align*}
J_\Psi(\lambda_k)[d] = -\Psi(\lambda_k),
\end{align*}
where $J_\Psi(\lambda_k)$ is generalized Jacobian of $\Psi$.  Note that when $W$ is a diagonal operator and $g(V)=\|V\|_1$, it is well-known that the solution to the scaled proximal mapping \eqref{eq:scaledProx} can be computed by thresholding each entry of $Z$. Moreover, by the chain rule, we have
\begin{align*}{
J_\Psi(\lambda_k)[d] = \mathcal{A}\left(\partial\Prox_{\mu g}^{W}\left(X-\mu W^{-1}(\grad f(X)-\mathcal{A}^*\lambda_k)\right)\circ\left(\mu W^{-1}\mathcal{A}^*d\right)\right),}
\end{align*}
where $\partial\Prox_{\mu g}^{W}(\cdot)$ denotes the \whfirrev{}{generalized} Clarke subdifferential of $\Prox_{\mu g}^{W}(\cdot)$ and $\circ$ denotes the entrywise product of two matrices. Once again,  when $W$ is a diagonal operator and $g(V)=\|V\|_1$ the \whfirrev{}{generalized} Clarke subdifferential of $\Prox_{\mu g}^{W}(\cdot)$ can also be computed in an entrywise manner \cite{XiLiWeZh2018a,LiSunToh18,Clarke1990a}. 
Note that in our implementations of the semi-smooth Newton method, we follow the algorithmic framework in the work of Xiao \etal\cite{XiLiWeZh2018a}, where a safeguard step is also introduced.

\whfirrev{}{
The computational complexity is measured by flop counts. A flop is a floating point operation \cite[Section 1.2.4]{GV96}. The dominant computational costs in one evaluation of $\Psi(\lambda)$ and $J_\Psi(\lambda_k)[d]$ are respectively $4 n p^2$ and $6 np^2$ flops. Therefore, the total computational costs in the semi-smooth Newton method is on the order of $n p^2$ where  the coefficient depends on the number of iterations (usually 2 or 3 iterations). Note that one evaluation of $f$, $\nabla f$, $\P_{\T_x \St(p, n)}$, $R$ and $R^{-1}$ takes $2 m n p$, $2 m n p$, $4 n p^2$, $4 n p^2 + O(p^3)$ and $4 n p^2 + O(p^3)$, respectively. Therefore, the overall complexity of Algorithm~\ref{alg:ARPG} is on the order of $k m n p + k n p^2 + k p^3$, where the value of $k$ depends on the number of outer/inner iterations. 
}

\subsection{Convergence analysis \kwcomm{[need minor adjustment after analysis is double checked]}{} \whcommsec{[Done]}{}}
In this section we show that any accumulation point of the sequence $\{z_k\}$ generated by Algorithm~\ref{alg:ARPG} is a stationary point. In other words, if $z_*$ is an accumulation point of $\{z_k\}$, then there holds $0 \in \P_{\T_{z_*} \mathcal{M}} \partial F(z_*)$, where $\partial F(x)$ denotes the \whfirrev{}{generalized Clarke} subgradient of $F$ at $x$ and $\P_{\T_{z_*} \mathcal{M}}$ denotes the orthogonal projection to the tangent space of $\mathcal{M}$ at $z$. 
\whfirrev{}{In the work of Chen \etal~\cite{CMSZ2019}, it has been shown that the search direction computed in ManPG converges to zero, and if the search direction is zero at $x$, then $x$ is a stationary point. To the best of our knowledge, this does not directly imply that any accumulation point of the iterates generated by the algorithms is a stationary point. For the Euclidean case, such a result can be found in the work~\cite{RocWet1998,LL2015}. For the Riemannian case, 
}
\whfirrev{}{
we complete the stationary point analysis by showing that if $(z_k, u_k)$ is a sequence such that $u_k\in \P_{\T_{z_k} \mathcal{M}} \partial F(z_k+\eta_{z_k})$, $$\mbox{$z_k\rightarrow z_*$,~ $F(z_k)\rightarrow F(z_*)$,~ $\eta_{z_k}\rightarrow 0$,~ and $u_k\rightarrow 0$,}$$ then  we have $ 0 \in \P_{\T_{z_*} \mathcal{M}} \partial F(z_*)$.
}

The analysis relies on the following assumptions.
\begin{assumption} \label{as3}
The function $F$ is coercive, i.e., $F(x) \rightarrow +\infty$ as $\|x\|_{\mathrm{F}} \rightarrow \infty$.
\end{assumption}

\begin{assumption} \label{as1}
The function $f:\mathbb{R}^{n \times p} \rightarrow \mathbb{R}$ is Lipschitz continuously differentiable.
\end{assumption}

\begin{assumption} \label{as2}
The function $g:\mathbb{R}^{n \times p} \rightarrow \mathbb{R}$ is continuous and convex.
\end{assumption}

\begin{assumption} \label{as4}
There exists two positive constants $0 < \kappa \leq \tilde{\kappa}$ such that the weight matrix $W$ at $z_k$, denoted by $W_{z_k}$, satisfies that the eigenvalues of $W_{z_k}$ are between $\kappa$ and $\tilde{\kappa}$ for all $k$.
\end{assumption}

It is worth mentioning that Assumption~\ref{as4} is not a stringent assumption. For example, the diagonal weight constructed for the sparse PCA problem in \eqref{eq:weight} satisfies this assumption since the Stiefel manifold is compact and $J$ is continuous over this manifold.

\begin{lemma} \label{le1}
Suppose Assumptions~\ref{as3}, \ref{as1} and \ref{as2} hold. Then
\begin{enumerate}
\item \label{le1:1} the sublevel set $\Omega_{x_0} = \{x \in \mathcal{M} \mid F(x) \leq F(x_0) \}$ is bounded;
\item \label{le1:2} $F$ is Lipschitz continuous in $\Omega_{x_0}$ and bounded from below;
\item \label{le1:3} there exists a constant $M$ such that $\max_{x \in \Omega_{x_0}} \max_{v \in \partial F(x)} \|v\|_{\mathrm{F}} \leq M$.
\end{enumerate}
\end{lemma}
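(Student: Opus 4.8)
The plan is to establish the three claims in sequence, using coercivity to trap the sublevel set in a compact region and then using Lipschitz continuity of $F$ there to control its Clarke subdifferential.

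For claim~\ref{le1:1} I would appeal directly to Assumption~\ref{as3}. Coercivity of $F$ provides an $R > 0$ such that $F(x) > F(x_0)$ whenever $\|x\|_{\mathrm{F}} > R$; contrapositively, every $x$ with $F(x) \le F(x_0)$ obeys $\|x\|_{\mathrm{F}} \le R$, so $\Omega_{x_0}$ is contained in the closed ball of radius $R$ and is bounded. (One could instead note that $\mathcal{M}$ is compact so any subset is bounded, but I prefer to use the stated assumption.) For claim~\ref{le1:2} I would first upgrade boundedness to compactness: since $f$ is $C^1$ by Assumption~\ref{as1} and $g$ is continuous by Assumption~\ref{as2}, the sum $F = f + g$ is continuous, so $\Omega_{x_0}$ is the intersection of the closed sublevel set with the closed (indeed compact) manifold $\mathcal{M}$, hence closed; together with claim~\ref{le1:1} this makes $\Omega_{x_0}$ compact. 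I would then fix a closed ball $\bar{B} \subset \mathbb{R}^{n \times p}$ whose interior contains $\Omega_{x_0}$. On $\bar{B}$ the function $f$ is Lipschitz because $\nabla f$ is continuous on the compact $\bar{B}$ (Assumption~\ref{as1} in fact gives a global Lipschitz gradient), while $g$, being a finite convex function on all of $\mathbb{R}^{n \times p}$, is locally Lipschitz and therefore Lipschitz on the compact $\bar{B}$. Summing, $F$ is Lipschitz on $\bar{B}$, and in particular on $\Omega_{x_0}$, with some constant $M$. Boundedness from below is immediate: $F$ is continuous on the compact set $\Omega_{x_0}$, so it attains its minimum there.

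For claim~\ref{le1:3} I would reuse the neighbourhood Lipschitz estimate from the previous step together with the classical fact that the Clarke subdifferential of a function that is Lipschitz with constant $M$ on a neighbourhood of a point is contained in the closed ball of radius $M$ (Clarke's calculus, e.g.\ \cite{Clarke1990a}). Since $\bar{B}$ is an open neighbourhood of each point of $\Omega_{x_0}$ on which $F$ is $M$-Lipschitz, we obtain $\|v\|_{\mathrm{F}} \le M$ for every $v \in \partial F(x)$ and every $x \in \Omega_{x_0}$, which is precisely the asserted uniform bound.

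The only delicate point is in claim~\ref{le1:3}: the subdifferential bound requires Lipschitz continuity on an \emph{open} neighbourhood of each point, not merely on $\Omega_{x_0}$ itself. This is exactly why in claim~\ref{le1:2} I would deliberately prove the Lipschitz estimate on the slightly enlarged closed ball $\bar{B}$ rather than on $\Omega_{x_0}$ alone; with that extra room the bound on $\partial F$ follows routinely. The other ingredient worth a careful citation is the local-Lipschitz property of the finite-valued convex function $g$, but this too is classical.
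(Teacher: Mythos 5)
Your proof is correct and follows essentially the same route as the paper: coercivity gives boundedness of $\Omega_{x_0}$, local Lipschitz continuity of the convex $g$ combined with Assumption~\ref{as1} gives Lipschitz continuity of $F$ on a compact set, and Clarke's bound on the generalized gradient of a Lipschitz function gives claim~\ref{le1:3}. Your two small deviations --- proving the Lipschitz estimate on an enlarged open ball so that the Clarke subdifferential bound applies cleanly at every point of $\Omega_{x_0}$, and obtaining boundedness from below via the extreme value theorem rather than the paper's inequality $F(x)\geq F(x_0)-L_F R$ --- are refinements rather than a different argument, and if anything the first one tightens a point the paper leaves implicit.
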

\begin{proof}
It follows from Assumption~\ref{as3} that $\Omega_{x_0}$ is bounded. The convexity of $g$ implies that $g$ is locally Lipschitz continuous \cite[Theorem~4.1.1]{BoLe2006a}. 
Therefore, $g$ is Lipschitz continuous in the compact set $\Omega_{x_0}$. Combining this result with Assumption~\ref{as1} yields that $F$ is Lipschitz continuous in $\Omega_{x_0}$.
Since $\Omega_{x_0}$ is compact, there exists a ball with radius $R$, $B(0, R)$, such that $\Omega_{x_0} \subset B(0, R)$. We have
\begin{align*}
|F(x) - F(x_0)| \overset{\text{Lipschitz continuity of $F$}}{\leq}& L_F \|x - x_0\|_{\mathrm{F}} \leq L R,
\end{align*}
which yields $F(x) \geq F(x_0) - L_F R$ for all $x \in \Omega_{x_0}$. For any $x \notin \Omega_{x_0}$, we have $F(x) > F(x_0)$. Therefore, $F(x)$ is bounded from below.
By the work \cite[Proposition 2.1.2]{Clarke1990a}, 
the Lipschitz constant $L_F$ of $F$ in $\Omega_{x_0}$ satisfies that $\max_{x \in \Omega_{x_0}} \max_{v \in \partial F(x)} \|v\|_{\mathrm{F}} \leq L_F$.
\end{proof}

Since the subscripts of the sequence $\{z_k\}$ in Algorithm~\ref{alg:ARPG} are multiple of $N$, we use $\{\tilde{z}_i\}$ to denote $\{z_k\}$, where $\tilde{z}_i = z_{i N}$.
If $W_{z_k} \equiv I$, then the subproblem in Step~\ref{alg:Safeguard:st1} of Algorithm~\ref{alg:Safeguard} is the same as that in the work\cite{CMSZ2019}, and therefore related results from the work \cite{CMSZ2019}, stated in Lemma~\ref{le2}, hold.
Under Assumption~\ref{as4}, we claim that Lemma~\ref{le2}  can still be applied here without assuming $W_{z_k} \equiv I$. The proof is given in Appendix~\ref{app1}.\footnote{Note that the proof of Lemma~\ref{le2} in the work \cite{CMSZ2019}  essentially relies on \ref{le1:1} and \ref{le1:2} of Lemma~\ref{le1}.}

\begin{lemma} \label{le2}
The following properties hold:
\begin{enumerate}
\item \label{le2:1} There exist constants $\bar{\alpha} > 0$ and $\bar{\beta} > 0$ such that for any $0 < \alpha \leq \min(1, \bar{\alpha})$, the sequence $\{\tilde{z}_i\}$ satisfies:
\begin{equation*}
F(R_{\tilde{z}_i}(\alpha \eta_{\tilde{z}_i})) - F(\tilde{z}_i) \leq - \bar{\beta} \whfirrev{}{\alpha} \|\eta_{\tilde{z}_i}\|_{\mathrm{F}}^2.
\end{equation*}
\item \label{le2:2} If $\eta_{\tilde{z}_i} = 0$, then $\tilde{z}_i$ is a stationary point of Problem~\eqref{eq:optimization}.
\end{enumerate}
\end{lemma}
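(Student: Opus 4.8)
The plan is to mirror the sufficient-decrease analysis of ManPG~\cite{ChMaSoZh2018a}, replacing the identity metric of the proximal subproblem by the weighted one and carrying the eigenvalue bounds from Assumption~\ref{as4} through the argument. All estimates would be performed on the compact sublevel set $\Omega_{x_0}$ of Lemma~\ref{le1}, on which $f$ and $g$ are Lipschitz, $\grad f$ is bounded, and---since $\M$ is compact---the retraction is $C^2$ with uniformly bounded first- and second-order behavior; this is exactly what produces constants that are uniform in $i$. The starting point is that the subproblem defining $\eta_{\tilde z_i}$ is strongly convex, because $\frac{1}{2\mu}\|\cdot\|_W^2$ is $\tfrac{\kappa}{\mu}$-strongly convex by Assumption~\ref{as4}, $g$ is convex, and $\T_{\tilde z_i}\M$ is a linear subspace; hence $\eta_{\tilde z_i}$ is the unique minimizer.

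For part~\ref{le2:1}, I would first compare the subproblem value at $\eta_{\tilde z_i}$ with its value at the feasible point $\eta=0$, giving the model-decrease inequality
\[
\inner[]{\grad f(\tilde z_i)}{\eta_{\tilde z_i}} + g(\tilde z_i + \eta_{\tilde z_i}) - g(\tilde z_i) \le -\tfrac{1}{2\mu}\|\eta_{\tilde z_i}\|_W^2 \le -\tfrac{\kappa}{2\mu}\|\eta_{\tilde z_i}\|_{\mathrm{F}}^2 .
\]
I would then bound the true decrease along the retraction. For the smooth part, a second-order Taylor expansion of $\alpha\mapsto f(R_{\tilde z_i}(\alpha\eta_{\tilde z_i}))$, combined with the retraction property $\frac{d}{d\alpha}R_{\tilde z_i}(\alpha\eta_{\tilde z_i})|_{\alpha=0}=\eta_{\tilde z_i}$ and $\grad f=\P_{\T_{\tilde z_i}\M}\nabla f$ (by~\eqref{eq:grad}), yields $f(R_{\tilde z_i}(\alpha\eta_{\tilde z_i})) \le f(\tilde z_i) + \alpha\inner[]{\grad f(\tilde z_i)}{\eta_{\tilde z_i}} + \tfrac{L_1}{2}\alpha^2\|\eta_{\tilde z_i}\|_{\mathrm{F}}^2$ for a uniform $L_1$. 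For the nonsmooth part, convexity of $g$ gives $g(\tilde z_i+\alpha\eta_{\tilde z_i}) \le (1-\alpha)g(\tilde z_i) + \alpha g(\tilde z_i+\eta_{\tilde z_i})$, while the Lipschitz continuity of $g$ together with a curvature bound $\|R_{\tilde z_i}(\alpha\eta_{\tilde z_i}) - (\tilde z_i + \alpha\eta_{\tilde z_i})\|_{\mathrm{F}} \le L_2\alpha^2\|\eta_{\tilde z_i}\|_{\mathrm{F}}^2$ absorbs the nonlinearity of the retraction into a single $O(\alpha^2)$ term. Adding the two estimates and substituting the model-decrease inequality produces
\[
F(R_{\tilde z_i}(\alpha\eta_{\tilde z_i})) - F(\tilde z_i) \le -\alpha\left(\tfrac{\kappa}{2\mu} - C\alpha\right)\|\eta_{\tilde z_i}\|_{\mathrm{F}}^2 ,
\]
so choosing $\bar\alpha>0$ small enough that the bracket stays above $\sigma$ establishes the sufficient decrease of part~\ref{le2:1} for every $0<\alpha\le\min(1,\bar\alpha)$ and, in particular, shows the line search in Step~\ref{alg:Safeguard:st2} terminates with an accepted step no smaller than $\nu\min(1,\bar\alpha)$.

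For part~\ref{le2:2}, I would write the first-order optimality condition of the (convex) subproblem over the subspace $\T_{\tilde z_i}\M$: there exists $v\in\partial g(\tilde z_i+\eta_{\tilde z_i})$ such that $\grad f(\tilde z_i) + \tfrac{1}{\mu}\P_{\T_{\tilde z_i}\M}(W\eta_{\tilde z_i}) + \P_{\T_{\tilde z_i}\M}v = 0$. Setting $\eta_{\tilde z_i}=0$ (so that $\tilde z_i+\eta_{\tilde z_i}=\tilde z_i$) annihilates the middle term and leaves $\grad f(\tilde z_i)+\P_{\T_{\tilde z_i}\M}v=0$ with $v\in\partial g(\tilde z_i)$, i.e. $0\in\grad f(\tilde z_i)+\P_{\T_{\tilde z_i}\M}\partial g(\tilde z_i)=\hat\partial F(\tilde z_i)$, which is precisely the stationarity condition recalled in Section~\ref{sec:pre}.

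I expect the main obstacle to be the two retraction-curvature estimates and, above all, the extraction of constants $L_1,L_2,C$ that are uniform across all iterates: this is where the compactness of $\M$ and the inclusion $\{\tilde z_i\}\subset\Omega_{x_0}$ are indispensable, and where the weighted norm forces one to track $\kappa$ and $\tilde\kappa$ from Assumption~\ref{as4} throughout in place of the identity used in~\cite{ChMaSoZh2018a}. By contrast, part~\ref{le2:2} is routine once the subproblem's optimality condition is written correctly, the key point being that the weight contributes nothing when $\eta_{\tilde z_i}=0$.
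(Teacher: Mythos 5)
The paper gives no proof of this lemma at all---it defers to the sufficient-decrease analysis of \cite{ChMaSoZh2018a}, remarking only that the argument carries over under Assumption~\ref{as4} because it relies on parts \ref{le1:1} and \ref{le1:2} of Lemma~\ref{le1}---and your reconstruction is precisely that intended argument, with the eigenvalue bounds $\kappa,\tilde\kappa$ on $W$ correctly threaded through both the model-decrease step and the optimality condition in part~\ref{le2:2}. One remark: what your argument actually yields is $F(R_{\tilde z_i}(\alpha\eta_{\tilde z_i})) - F(\tilde z_i) \le -\alpha\bigl(\tfrac{\kappa}{2\mu} - C\alpha\bigr)\|\eta_{\tilde z_i}\|_{\mathrm{F}}^2$, a bound linear in $\alpha$ (which is the form matching the Armijo test in Step~\ref{alg:Safeguard:st2} of Algorithm~\ref{alg:Safeguard} and what Lemma~\ref{le3} actually uses, via the lower bound $\alpha \ge \nu\min(1,\bar\alpha)$ on the accepted step), rather than the $\alpha$-free bound $-\bar\beta\|\eta_{\tilde z_i}\|_{\mathrm{F}}^2$ as literally stated, which cannot hold with a fixed $\bar\beta>0$ uniformly as $\alpha \downarrow 0$; this discrepancy lies in the lemma's statement, not in your proof.
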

\whfirrev{}{The two items of Lemma~\ref{le2} follow from the work~\cite[Lemmas~5.2 and~5.3]{CMSZ2019}.}
The first item of Lemma~\ref{le2} implies that the line search  in Step~\ref{alg:Safeguard:st2} of Algorithm~\ref{alg:Safeguard} terminates in finite iterations. Therefore, Algorithm~\ref{alg:ARPG} is well-defined.

\begin{lemma} \label{le3}
Suppose Assumptions~\ref{as3}, \ref{as1}, ~\ref{as2} and~\ref{as4} hold. Then
\begin{enumerate}
\item \label{le3:1} $F(\tilde{z}_{i+1}) < F(\tilde{z}_i)$. Therefore, $\{\tilde{z}_i\} \subset \Omega_{x_0}$.
\item \label{le3:2} The sequence $\{\eta_{\tilde{z}_i}\}$ satisfies $\lim_{i\rightarrow \infty} \|\eta_{\tilde{z}_i}\|_{\mathrm{F}} = 0$.
\end{enumerate}
\end{lemma}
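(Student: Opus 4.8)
The plan is to derive a single sufficient-decrease inequality of the form $F(\tilde z_{i+1}) \le F(\tilde z_i) - c\,\|\eta_{\tilde z_i}\|_{\mathrm{F}}^2$ with a constant $c>0$ independent of $i$, from which both items follow at once. The natural starting point is the safeguard call of Algorithm~\ref{alg:Safeguard} at iteration $k = iN$, where the input compared iterate is exactly $\tilde z_i = z_{iN}$ and the output is $\tilde z_{i+1} = z_{(i+1)N}$. The backtracking loop in Step~\ref{alg:Safeguard:st2} returns some stepsize $\alpha_i>0$ satisfying the Armijo-type condition $F(R_{\tilde z_i}(\alpha_i\eta_{\tilde z_i})) \le F(\tilde z_i) - \sigma\alpha_i\|\eta_{\tilde z_i}\|_{\mathrm{F}}^2$, which anchors the whole argument.

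For item~\ref{le3:1}, I would inspect the two branches of the conditional in Step~\ref{alg:Safeguard:st3}. If the safeguard takes effect, then $\tilde z_{i+1} = R_{\tilde z_i}(\alpha_i\eta_{\tilde z_i})$ and the Armijo condition gives the decrease directly. If it does not take effect, then $\tilde z_{i+1} = x_{iN}$ with $F(x_{iN}) \le F(R_{\tilde z_i}(\alpha_i\eta_{\tilde z_i}))$, which is precisely the negation of the test in Step~\ref{alg:Safeguard:st3}; chaining this with the Armijo condition again yields $F(\tilde z_{i+1}) \le F(\tilde z_i) - \sigma\alpha_i\|\eta_{\tilde z_i}\|_{\mathrm{F}}^2$. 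Thus in either branch $F(\tilde z_{i+1}) \le F(\tilde z_i) - \sigma\alpha_i\|\eta_{\tilde z_i}\|_{\mathrm{F}}^2 < F(\tilde z_i)$ whenever $\eta_{\tilde z_i}\ne 0$ (and if $\eta_{\tilde z_i}=0$ for some $i$, then $\tilde z_i$ is already a stationary point by item~\ref{le2:2} of Lemma~\ref{le2} and nothing remains to prove). Since $\tilde z_0 = z_0 = x_0$, the monotone decrease forces $F(\tilde z_i)\le F(x_0)$ for all $i$, hence $\{\tilde z_i\}\subset\Omega_{x_0}$.

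For item~\ref{le3:2}, the remaining task is to bound $\alpha_i$ away from zero uniformly. Here I would invoke item~\ref{le2:1} of Lemma~\ref{le2}: the decrease $F(R_{\tilde z_i}(\alpha\eta_{\tilde z_i})) - F(\tilde z_i) \le -\bar\beta\|\eta_{\tilde z_i}\|_{\mathrm{F}}^2$ holds for every $0<\alpha\le\min(1,\bar\alpha)$ with constants $\bar\alpha,\bar\beta$ uniform over the sequence. Consequently, whenever $\alpha \le \min(1,\bar\alpha,\bar\beta/\sigma)$ the Armijo test of Step~\ref{alg:Safeguard:st2} is automatically met, so the backtracking loop terminates with $\alpha_i \ge \nu\min(1,\bar\alpha,\bar\beta/\sigma) =: \hat\alpha > 0$. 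Feeding this back into the inequality from item~\ref{le3:1} gives $F(\tilde z_{i+1}) \le F(\tilde z_i) - \sigma\hat\alpha\|\eta_{\tilde z_i}\|_{\mathrm{F}}^2$. Summing over $i=0,\dots,K-1$ telescopes to $\sigma\hat\alpha\sum_{i=0}^{K-1}\|\eta_{\tilde z_i}\|_{\mathrm{F}}^2 \le F(\tilde z_0) - F(\tilde z_K)$, and since $F$ is bounded below on $\Omega_{x_0}$ by item~\ref{le1:2} of Lemma~\ref{le1}, the right-hand side stays bounded uniformly in $K$. Letting $K\to\infty$ shows $\sum_{i}\|\eta_{\tilde z_i}\|_{\mathrm{F}}^2 < \infty$, whence $\|\eta_{\tilde z_i}\|_{\mathrm{F}}\to 0$.

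The main obstacle is not any single calculation but correctly reconciling the safeguard's accept/reject logic with the sufficient-decrease inequality: one must verify that the rejected branch (where the accelerated iterate $x_{iN}$ is retained) still inherits a decrease relative to $\tilde z_i$, which works only because the rejection test compares against $F(R_{\tilde z_i}(\alpha_i\eta_{\tilde z_i}))$, a quantity already controlled by the line search. The second delicate point is ensuring that $\bar\alpha,\bar\beta$, and hence $\hat\alpha$, are uniform in $i$ so that the stepsize cannot degenerate to zero along the sequence; this is exactly where compactness of $\M$, the inclusion $\{\tilde z_i\}\subset\Omega_{x_0}$, and the spectral bounds on $W$ in Assumption~\ref{as4} enter.
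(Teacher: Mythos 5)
Your proposal is correct and follows essentially the same route as the paper: both arguments extract $F(\tilde z_{i+1}) \le F(R_{\tilde z_i}(\alpha\eta_{\tilde z_i}))$ from the accept/reject branches of the safeguard, combine it with the sufficient decrease of Lemma~\ref{le2}\eqref{le2:1}, and invoke the lower boundedness of $F$ from Lemma~\ref{le1} to conclude. The only difference is in bookkeeping: the paper applies Lemma~\ref{le2}\eqref{le2:1} directly at the accepted stepsize and reads off $\|\eta_{\tilde z_i}\|_{\mathrm{F}}\to 0$ from the vanishing of consecutive differences of the monotone bounded sequence $\{F(\tilde z_i)\}$, whereas you route the decrease through the Armijo test together with an explicit uniform lower bound $\alpha_i\ge\nu\min(1,\bar\alpha,\bar\beta/\sigma)$ and a telescoping sum --- a slightly more explicit but equivalent argument.
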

\begin{proof}
By Steps~\ref{alg:Safeguard:st3} to~\ref{alg:Safeguard:st4} of Algorithm~\ref{alg:Safeguard}, we have $F(\tilde{z}_{i+1}) \leq F(R_{\tilde{z}_i}(\alpha_i \eta_{\tilde{z}_i}))$, \whfirrev{}{where $\alpha_i$ denotes the accepted step size}. Combining it with \ref{le2:1} of Lemma~\ref{le2} yields $F(\tilde{z}_{i+1}) < F(\tilde{z}_i)$. Since $F$ is bounded from below by \ref{le1:2} of Lemma~\ref{le1} and $\{F(\tilde{z}_i)\}$ is decreasing, we have $\lim_{i \rightarrow \infty} F(\tilde{z}_i) - F(R_{\tilde{z}_i}(\alpha \eta_{\tilde{z}_i})) = 0$. Combining it with \ref{le2:1} of Lemma~\ref{le2} yields $\lim_{k\rightarrow \infty} \alpha_i \|\eta_{\tilde{z}_i}\|_{\mathrm{F}}^2 = 0$. \whfirrev{}{
By~\ref{le2:1} of Lemma~\ref{le2} and the backtracking in Step~\ref{alg:Safeguard:st2} of Algorithm~\ref{alg:Safeguard}, we have that $\alpha_i \geq \min(1, \nu \bar{\beta} \bar{\alpha} / \sigma)$ for all $i$. Therefore, $\lim_{k\rightarrow \infty} \|\eta_{\tilde{z}_i}\|_{\mathrm{F}} = 0$.
}
\end{proof}

The norms of $\eta_{\tilde{z}_i}$ go to zero by \ref{le3:2} of Lemma~\ref{le3}. \whfirrev{}{The following theorem further establishes that 0 is in the subgradient of any accumulation point of $\tilde{z}_i$.}
\begin{theorem} \label{th1}
Suppose Assumptions~\ref{as3}, \ref{as1}, \ref{as2} and~\ref{as4} hold.
Let $z_*$ be any accumulation point of the sequence $\{\tilde{z}_{i}\}$. We have
\begin{equation*}
0 \in \P_{\T_{z_*} \mathcal{M}} \partial F(z_*).
\end{equation*}
\end{theorem}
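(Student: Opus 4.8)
The plan is to combine the first-order optimality condition of the proximal subproblem in Step~\ref{alg:Safeguard:st1} with an outer-semicontinuity (closedness) argument for the set-valued map $x \mapsto \P_{\T_x\M}\partial F(x)$, exactly along the roadmap sketched before the statement. First I would fix a subsequence of $\{\tilde{z}_i\}$ converging to $z_*$ and, for notational ease, keep denoting it by $\{\tilde{z}_i\}$. Since $\eta_{\tilde{z}_i}$ minimizes a convex function over the linear subspace $\T_{\tilde{z}_i}\M$, the subdifferential sum rule (valid because $g$ is continuous and convex by Assumption~\ref{as2}) produces a selection $\xi_i \in \partial g(\tilde{z}_i + \eta_{\tilde{z}_i})$ with
\[
\P_{\T_{\tilde{z}_i}\M}\nabla f(\tilde{z}_i) + \tfrac{1}{\mu}\P_{\T_{\tilde{z}_i}\M}W_{\tilde{z}_i}\eta_{\tilde{z}_i} + \P_{\T_{\tilde{z}_i}\M}\xi_i = 0,
\]
where I used $\grad f(\tilde{z}_i)=\P_{\T_{\tilde{z}_i}\M}\nabla f(\tilde{z}_i)$ from \eqref{eq:grad}. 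I would then define the candidate
\[
u_i := \P_{\T_{\tilde{z}_i}\M}\bigl(\nabla f(\tilde{z}_i + \eta_{\tilde{z}_i}) + \xi_i\bigr) \in \P_{\T_{\tilde{z}_i}\M}\partial F(\tilde{z}_i + \eta_{\tilde{z}_i}),
\]
the inclusion holding because $f$ is smooth, so $\partial F = \nabla f + \partial g$.

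Next I would show $u_i \to 0$. Substituting the optimality condition into the definition of $u_i$ cancels the $\P_{\T_{\tilde{z}_i}\M}\xi_i$ term and leaves
\[
u_i = \P_{\T_{\tilde{z}_i}\M}\bigl(\nabla f(\tilde{z}_i + \eta_{\tilde{z}_i}) - \nabla f(\tilde{z}_i)\bigr) - \tfrac{1}{\mu}\P_{\T_{\tilde{z}_i}\M}W_{\tilde{z}_i}\eta_{\tilde{z}_i}.
\]
Since orthogonal projection is nonexpansive, $\nabla f$ is $L$-Lipschitz (Assumption~\ref{as1}), and the eigenvalues of $W_{\tilde{z}_i}$ are bounded by $\tilde{\kappa}$ (Assumption~\ref{as4}), this gives $\|u_i\|_{\mathrm{F}} \leq (L + \tilde{\kappa}/\mu)\|\eta_{\tilde{z}_i}\|_{\mathrm{F}}$, which tends to $0$ by item~\ref{le3:2} of Lemma~\ref{le3}.

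Finally I would pass to the limit. The selections $\xi_i$ lie in the subdifferential of $g$ evaluated at points inside the bounded sublevel set $\Omega_{x_0}$, hence are uniformly bounded by item~\ref{le1:3} of Lemma~\ref{le1}, so after a further subsequence $\xi_i \to \xi_*$. Because $\tilde{z}_i + \eta_{\tilde{z}_i} \to z_*$ and the convex subdifferential $\partial g$ is outer semicontinuous, $\xi_* \in \partial g(z_*)$; continuity of $\nabla f$ gives $\nabla f(\tilde{z}_i + \eta_{\tilde{z}_i}) \to \nabla f(z_*)$; and continuity of the tangent-space projector $x \mapsto \P_{\T_x\M}$ (explicit from \eqref{eq:proj} on the Stiefel manifold) gives $\P_{\T_{\tilde{z}_i}\M} \to \P_{\T_{z_*}\M}$. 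Passing to the limit in the expression for $u_i$ and using $u_i\to 0$ then yields $0 = \P_{\T_{z_*}\M}(\nabla f(z_*) + \xi_*)$ with $\nabla f(z_*) + \xi_* \in \partial F(z_*)$, i.e.\ $0 \in \P_{\T_{z_*}\M}\partial F(z_*)$.

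The main obstacle is precisely this last limit-passing step: it is a closedness statement for the composed multifunction $x \mapsto \P_{\T_x\M}\partial F(x)$ along a diagonal in which the evaluation point $\tilde{z}_i + \eta_{\tilde{z}_i}$ and the projection base point $\tilde{z}_i$ drift to the same limit $z_*$ while the subgradient selection $\xi_i$ converges. The delicate ingredients are securing a convergent subgradient selection (via the uniform bound $M$ of Lemma~\ref{le1}), invoking outer semicontinuity of $\partial g$ at the common limit $z_*$, and the continuity of the tangent-space projector so that the two distinct base points $\tilde{z}_i$ and $z_*$ can be reconciled. The remaining pieces—extracting the optimality condition and the norm estimate for $u_i$—are routine.
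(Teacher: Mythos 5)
Your proposal is correct and follows essentially the same route as the paper's proof: both start from the first-order optimality condition of the subproblem in Step~\ref{alg:Safeguard:st1}, exhibit an element of $\P_{\T_{\tilde z_i}\M}\partial F(\tilde z_i+\eta_{\tilde z_i})$ bounded by $(L+\tilde\kappa/\mu)\|\eta_{\tilde z_i}\|_{\mathrm F}$ so that it vanishes by Lemma~\ref{le3}, extract a bounded convergent subgradient selection, and pass to the limit using closedness of the subdifferential together with continuity of the tangent/normal projector in the base point. The only (cosmetic) difference is the bookkeeping in the final step: the paper lifts the tangent-space inclusion to $\partial F(\tilde z_i+\eta_{\tilde z_i})$ by adding a normal-space correction $\xi_i$, invokes the closedness of $\partial F$ from \cite{BoSaTe2014a}, and then argues the limiting correction is normal at $z_*$, whereas you keep a selection $\xi_i\in\partial g(\tilde z_i+\eta_{\tilde z_i})$ and use outer semicontinuity of the convex subdifferential $\partial g$ directly --- both are valid, and your bound on $\xi_i$ should be attributed to the local Lipschitz continuity of $g$ on the compact sublevel set rather than literally to item~\ref{le1:3} of Lemma~\ref{le1}, which bounds $\partial F$.
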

\begin{proof}
By Step~\ref{alg:Safeguard:st1} of Algorithm~\ref{alg:Safeguard}, we have
\begin{align*}
\eta_{\tilde{z}_i} = \argmin_{\eta \in \T_{\tilde{z}_i} \mathcal{M}} \inner[]{\grad f(\tilde{z}_i)}{\eta} + \frac{1}{2 \mu} \|\eta\|_{W_{\tilde{z}_i}}^2 + g(\tilde{z}_i + \eta).
\end{align*}
Therefore,
$
0 \in \grad f(\tilde{z}_i) + \frac{1}{\mu} W_{\tilde{z}_i} \eta_{\tilde{z}_i} + \P_{\T_{\tilde{z}_i} \mathcal{M}} \partial g(\tilde{z}_i + \eta_{\tilde{z}_i})
$
which yields
\begin{equation*}
- \grad f(\tilde{z}_i) + \grad f(\tilde{z}_i + \eta_{\tilde{z}_i}) - \frac{1}{\mu} W_{\tilde{z}_i} \eta_{\tilde{z}_i} \in \P_{\T_{\tilde{z}_i} \mathcal{M}} \partial F(\tilde{z}_i + \eta_{\tilde{z}_i}).
\end{equation*}
Thus, there exists a sequence $\xi_{i} \in \N_{\tilde{z}_i} \mathcal{M}$ such that
\begin{equation*}
- \grad f(\tilde{z}_i) + \grad f(\tilde{z}_i + \eta_{\tilde{z}_i}) - \frac{1}{\mu} W_{\tilde{z}_i} \eta_{\tilde{z}_i} + \xi_{i} \in \partial F(\tilde{z}_i + \eta_{\tilde{z}_i}),
\end{equation*}
where $\N_{\tilde{z}_i} \mathcal{M}$ denotes the normal space of $\mathcal{M}$ at $\tilde{z}_i$.
Let $\tilde{z}_{i_j}$ be the subsequence converging to $z_*$. We have
\begin{equation*}
- \grad f(\tilde{z}_{i_j}) + \grad f(\tilde{z}_{i_j} + \eta_{\tilde{z}_{i_j}}) - \frac{1}{\mu} W_{\tilde{z}_i} \eta_{\tilde{z}_{i_j}} + \xi_{i_j}  \in \partial F(\tilde{z}_{i_j} + \eta_{\tilde{z}_{i_j}}).
\end{equation*}
By~\ref{le1:3} of Lemma~\ref{le1}, we have that $\|\xi_{i_j}\|_{\mathrm{F}} < M$ for all $j$. Therefore, there exists a converging subsequence $\{\xi_{i_{j_s}}\}$ and let $\xi_*$ denote its limit point.
It follows from \ref{le3:2} of Lemma~\ref{le3} and Assumptions~\ref{as1} and~\ref{as4} that
\begin{equation*}
- \grad f(\tilde{z}_{i_{j_s}}) + \grad f(\tilde{z}_{i_{j_s}} + \eta_{\tilde{z}_{i_{j_s}}}) - \frac{1}{\mu} W_{\tilde{z}_i} \eta_{\tilde{z}_{i_{j_s}}} + \xi_{i_{j_s}} \rightarrow \xi_* \hbox{ and } \tilde{z}_{i_{j_s}} + \eta_{\tilde{z}_{i_{j_s}}} \rightarrow z_*,
\end{equation*}
as $s \rightarrow \infty$. Then by the work \cite[Remark 1(ii)]{BoSaTe2014a}, it holds that
\begin{equation} \label{eq:e1}
\xi_* \in \partial F(z_*).
\end{equation}
\whfirrev{}{
Note that in~\eqref{eq:e1}, $F$ is viewed as a function on a Euclidean space and $\partial$ denotes the (non-Riemannian) generalized Clarke subdifferential.
}
Since the projection $\P_{N_x \mathcal{M}}$ is smooth with respect to the root $x$, we have that
$$
\xi_{i_{j_s}} = \P_{\N_{\tilde{z}_{i_{j_s}}} \mathcal{M}} \xi_{i_{j_s}} \rightarrow \P_{\N_{z_*}\mathcal{M}} \xi_* \hbox{ and } \xi_{i_{j_s}} \rightarrow \xi_*,
$$
as $s \rightarrow \infty$.
Therefore, $\P_{\N_{z_*}\mathcal{M}} \xi_* = \xi_*$, which implies $\xi_*$ is in the normal space at $z_*$. It follows from~\eqref{eq:e1} that
$$
0 \in \P_{\T_{z_*}\mathcal{M}} \partial F(z_*),
$$
which completes the proof.
\end{proof}


\section{Numerical Experiments}\label{sec:num}

\whcomm{[
More comparisons may be added. The following are SPCA models, whose codes are readily available online.

model by Zou, Hastie, Tibshirani. Codes:
\url{http://www2.imm.dtu.dk/projects/spasm/}

model by Hein and T. Buhler. Codes:
\url{https://www.ml.uni-saarland.de/code/sparsePCA/sparsePCA.htm}

8 models: codes:
\url{https://github.com/amjams/spca_am}

]}{}

\whcomm{[A description of ManPG-Ada can be added here.]}{}

This section evaluates the empirical performance of \kw{ AManPG  with and without the diagonal weight 
using the sparse PCA problem \eqref{eq:sparsePCA}, and compare them with the existing methods. 
}

\subsection{Testing environment and parameter settings}\label{sec:numerics01}
\kwcomm{}{All the tested algorithms are implemented in the ROPTLIB package \cite{HAGH2016} using C++, with a MATLAB interface.}  The experiments are performed in Matlab R2019a on a 64 bit MacOS Mojave platform with 2.7 Ghz CPU (Intel Core i7), and the source codes for reproducible research can be downloaded at~
\begin{align*}\mbox{\url{https://www.math.fsu.edu/~whuang2/papers/EFROSP.htm}.}
\end{align*}

In this section three different types of data matrices are tested, and they are generated through the following way:
\begin{enumerate}
\item {\bf Random data.} The entries in the data matrix $A$ are drawn from the standard normal distribution~$\mathcal{N}(0, 1)$.
\item {\bf DNA methylation data.} The data is available on the NCBI website with the reference number GSE32393~\cite{Joanna2012}.
\item {\bf Synthetic data.}  As is done in the work of Sj\"ostrand \etal\cite{SCLEE2018},  we first repeat the five principal components (shown in Figure~\ref{figPCs})  $m/5$ times to obtain an $m$-by-$n$ noise-free matrix. Then the data matrix $A$ is created by further adding a random noise matrix, where each entry of the noise matrix is drawn from~$\mathcal{N}(0, 0.25)$. 
\end{enumerate}
\begin{figure}[ht!]
\centering
\includegraphics[width=0.8\textwidth]{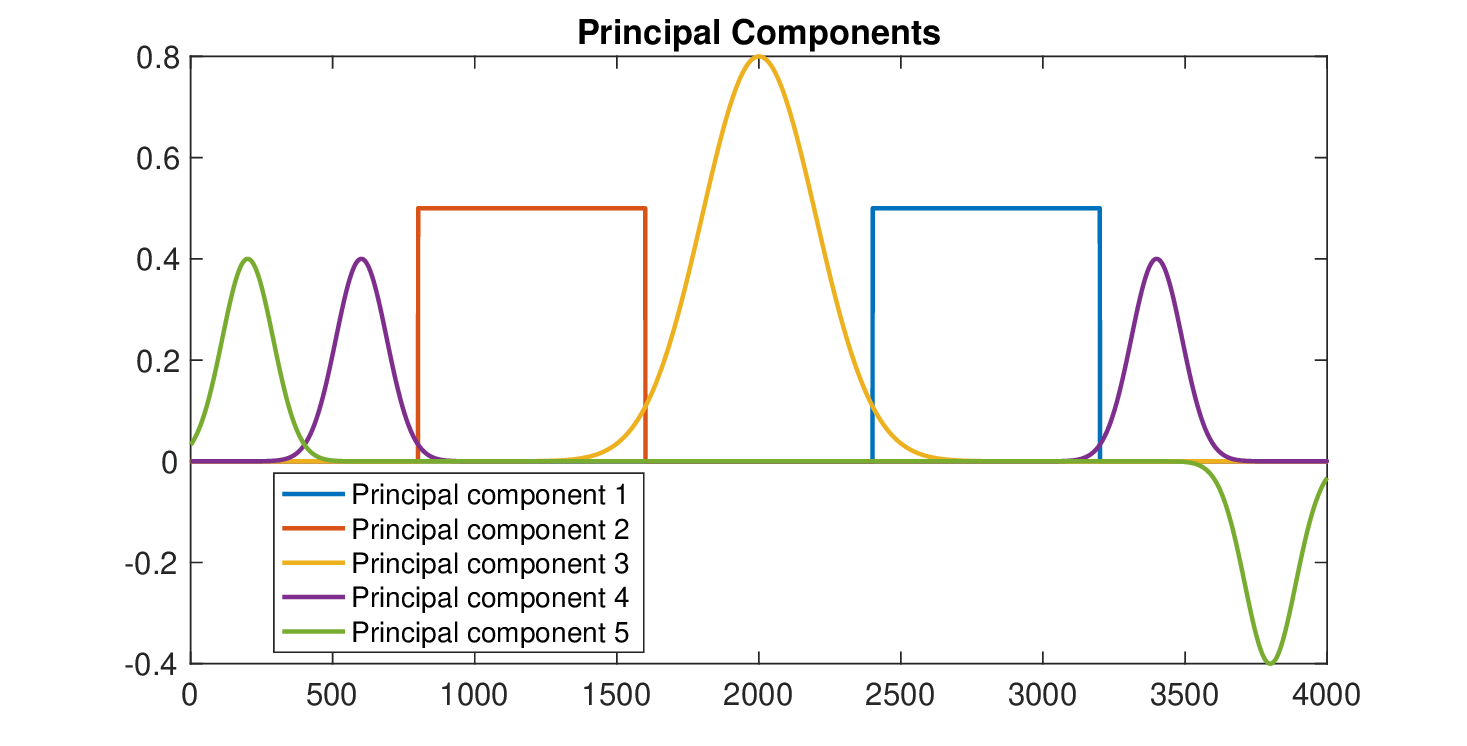}
\caption{
The five principal components used in the synthetic data.
}
\label{figPCs}
\end{figure}
In addition, the  matrices corresponding to the random data and the DNA methylation data are shifted and normalized such that their columns have mean zero and standard deviation one. The matrix for the synthetic data is only normalized such that it columns have standard deviation one since the sparsity over the five principal components needs to be preserved. 

The parameters  
$\sigma$, $\nu$, $\mu$, and $N$ in AManPG are set to be $10^{-4}$, $0.5$, $1 / (2 \|A\|_2^2)$, and $5$ respectively. When the diagonal weight is used, the parameters $\mu$ and $\tau$ are set to be 1 and 0.1, respectively.
All the tested algorithms terminate when $\|\eta_{z_k}\|_P^2 < \mu n \whfirrev{}{p} 10^{-10}$ or the number of iterations exceeds 10000, where $\|\eta_{z_k}\|_P$ denotes the $\mathrm{F}$-norm for the methods without the diagonal weight and the $W$-norm for the methods with the diagonal weight. The initial guess is constructed from the leading \whfirrev{}{$p$} right singular vectors of the given matrix~$A$. \whfirrev{}{Note that the reported computational time of all the algorithms do not include the computational time for the initial iterate.}

\subsection{\kw{Acceleration behavior of AManPG} and influence of the safeguard}\label{sec:numerics02}
\kw{Here  we empirically show that as in the Euclidean case AManPG (with $W=I$ in the Riemannian proximal subproblem) also  achieves  faster convergence than ManPG, and moreover the safeguard in AManPG is able to  stabilize the algorithm while not sacrificing the faster convergence rate. The parameters in ManPG are set to the default values.   Figure~\ref{RPG:TestRestart_f}   contains the comparisons in the three different scenarios. Note that AManPG without the safeguard is  abbreviated as AManPG w/o SG in the figure, while AManPG simply denotes the method with the safeguard here and later. }
When both the AManPG methods with  and without the safeguard converge, they perform similarly as shown in the left plot. This implies that using safeguard in AManPG does not destroy the efficient performance. In addition, AManPG w/o SG may not converge as shown in the middle and right plots. Therefore,  AManPG with the safeguard is preferred since it preserves the global convergence property as ManPG and on  the other hand converges faster than ManPG. 



\begin{figure}[ht!]
\hspace{-0.05\textwidth}\includegraphics[width=1.1\textwidth]{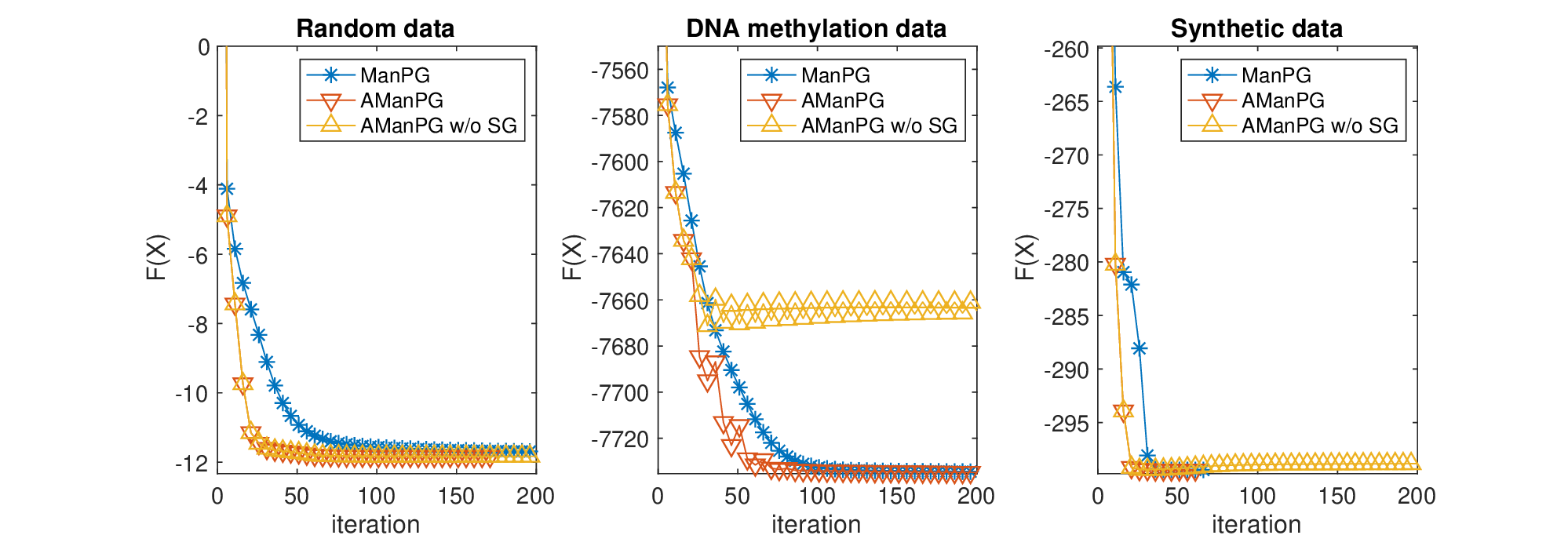}
\caption{
Plots of {\em function values versus iterations} for three typical instances with $\mu = 1 / (1.2 \|A\|_2^2)$. Left: Random data, \whfirrev{}{$n = 3000$, $m = 40$,} $\whfirrev{}{p}=4$, $\lambda=2.5$; Middle: DNA methylation data, \whfirrev{}{$n = 24589$, $m = 113$,} $\whfirrev{}{p}=4$, $\lambda=6$; Right: Synthetic data, \whfirrev{}{$n = 4000$, $m = 400$,} $\whfirrev{}{p} = 5$, $\lambda = 1.5$; The number of restarts in the safeguard in the three tests are 1, 12, and, 3, respectively, from left to right. The values $\|x_{k+1}-x_k\|_F$ of AManPG w/o SG in the middle and right plots stay above 0.22 and 0.07 respectively, up to 10000 iterations.
}
\label{RPG:TestRestart_f}
\end{figure}

\subsection{\kw{Comparisons with other algorithms}}\label{sec:numerics03}
\kw{In this section we compare the performance of AManPG and ManPG-Ada with and without diagonal weight. ManPG-Ada is a variant of ManPG which is also introduced in the work of Chen \etal \cite{CMSZ2019}. It has been observed in the work \cite{CMSZ2019} that AManPG-Ada can achieve faster convergence than ManPG by adaptively adjusting the constant $\mu$ in~\eqref{eq:RPG}.  
The parameters in ManPG-Ada are set to the default values. 
Note that the associated algorithms   using the diagonal weight computed in the way presented in Section~\ref{sec:weight} are denoted by AManPG-D and ManPG-Ada-D, respectively, while AManPG and ManPG-Ada denote the algorithms without the diagonal weight (i.e.,  $W=I$ in the Riemannian proximal subproblem). }
\whcommtrd{}{
These methods are also compared to SOC (splitting method for orthogonality), as Euclidean space based method introduced in the work of Lai \etal~\cite{LO2014}. Since the optimization problem~\eqref{eq:sparsePCA} can be written as
\begin{align} \label{ReWrProb}
\begin{array}{c}
\min_{X, Q, P \in \mathbb{R}^{n \times p}} -\trace(P^T A^T A P) + \lambda \|Q\|_1, \\
\hbox{ s.t. $Q=P, X=P, X^TX = I_p$,}
\end{array}
\end{align}
the SOC method solves~\eqref{ReWrProb} by a three-block ADMM:
\begin{align}
P_{k+1} =& \argmin_{P} -\trace(P^T A^T A P) + \frac{\beta}{2} \|P - Q_k + \Lambda_k\|_{\mathrm{F}}^2 + \frac{\beta}{2} \|P - X_k + \Gamma_k\|_{\mathrm{F}}^2, \label{SOCst1} \\
Q_{k+1} =& \argmin_{Q} \lambda \|Q_1\| + \frac{\beta}{2} \|P_{k+1} - Q + \Lambda_k\|_{\mathrm{F}}^2, \nonumber \\
X_{k+1} =& \argmin_X \frac{\beta}{2} \|P_{k+1} - X + \Gamma_k\|_{\mathrm{F}}^2, \hbox{ s.t. } X^T X = I_p, \\
\Lambda_{k+1} =& \Lambda_k + P_{k+1} - Q_{k+1}, \nonumber \\
\Gamma_{k+1} =& \Gamma_k + P_{k+1} - X_{k+1}, \nonumber
\end{align}
where $\beta$ is a constant. Computing $P_{k+1}$ in~\eqref{SOCst1} requires to solve a linear system $(\beta I_n - A^T A) X = B$ for a given matrix $B$. when $m < n$ and $\beta I_n - A^T A$ is invertible (which holds in our experiments), it is solved by $X = (\beta I_n - A^T A)^{-1} B = \frac{1}{\beta} \left( B + A^T (\beta - A A^T)^{-1} A B \right)$. The parameter $\beta$ is set to be 2. The SOC method stops when $F(X_k) < F_r + 10^{-7}$, where $F_r$ is maximum of the function values given by ManPG-Ada, ManPG-Ada-D, AManPG, and AManPG-D. The SOC method has been tested in the work of Chen \etal~\cite{CMSZ2019} and it is shown therein that it is the most efficient method among the tested Euclidean space based methods.

}

Tables~\ref{table3}, ~\ref{table4} and~\ref{table5} show the performance of the five algorithms with various values of $\lambda$. In the tables, the numbers of iterations, runtime in seconds, final function values, the norms of $\|\eta_{z_k}\|_{P}$, sparsity levels and the adjusted variances~\cite{ZoHaTi2006a} are reported. The sparsity level is the portion of entries that are less than $10^{-5}$ in magnitude. The variance in the table refers to the normalized value given by the variance of the sparse PCA solution divided by the maximum variance achieved by the PCA.

\begin{table}[ht!]
\begin{center}
\caption{An average result of 20 random runs for the random data: $\whfirrev{}{p} = 4$, $n = 3000$ and $m = 40$. The subscript $k$ indicates a scale of $10^{k}$.}
\label{table3}
\vspace{0.25cm}
\makegapedcells
\setcellgapes{3pt}
\begin{tabular}{c|c|cccccc}
  \hline
  $\lambda$ & Algo & iter & time & $f$ & $\|\eta_{z_k}\|$ & sparsity & variance \\
  \hline
2.0 & SOC         & 1894 & 1.06 & $-7.02_{1}$ & $\backslash$ & 0.52 & 0.84 \\
2.0 & ManPG-Ada   & 359 & 0.35 & $-7.02_{1}$ & $5.12_{-4}$ & 0.52 & 0.84 \\
2.0 & ManPG-Ada-D & 335 & 0.37 & $-7.02_{1}$ & $5.22_{-4}$ & 0.52 & 0.84 \\
2.0 & AManPG      & 128 & 0.20 & $-7.02_{1}$ & $4.35_{-4}$ & 0.52 & 0.84 \\
2.0 & AManPG-D    & 118 & 0.21 & $-7.02_{1}$ & $4.23_{-4}$ & 0.52 & 0.84 \\
  \hline
2.5 & SOC         & 2515 & 1.43 & $-1.44_{1}$ & $\backslash$ & 0.66 & 0.72 \\
2.5 & ManPG-Ada   & 358 & 0.36 & $-1.44_{1}$ & $5.89_{-4}$ & 0.66 & 0.72 \\
2.5 & ManPG-Ada-D & 327 & 0.39 & $-1.44_{1}$ & $5.84_{-4}$ & 0.66 & 0.72 \\
2.5 & AManPG      & 130 & 0.22 & $-1.44_{1}$ & $5.13_{-4}$ & 0.66 & 0.72 \\
2.5 & AManPG-D    & 115 & 0.22 & $-1.44_{1}$ & $4.99_{-4}$ & 0.66 & 0.72 \\
  \hline
3.0 & SOC         & 3099 & 1.77 & $2.84_{1}$ & $\backslash$ & 0.83 & 0.48 \\
3.0 & ManPG-Ada   & 389 & 0.43 & $2.84_{1}$ & $6.89_{-4}$ & 0.83 & 0.48 \\
3.0 & ManPG-Ada-D & 310 & 0.43 & $2.81_{1}$ & $6.96_{-4}$ & 0.83 & 0.48 \\
3.0 & AManPG      & 166 & 0.33 & $2.80_{1}$ & $6.04_{-4}$ & 0.83 & 0.47 \\
3.0 & AManPG-D    & 134 & 0.31 & $2.73_{1}$ & $5.65_{-4}$ & 0.84 & 0.46 \\
  \hline
\end{tabular}
\end{center}
\end{table}

\begin{table}[ht!]
\begin{center}
\caption{The result for the DNA methylation data: $\whfirrev{}{p} = 4$, $n = 24589$ and $m = 113$. The subscript $k$ indicates a scale of $10^{k}$.}
\label{table4}
\vspace{0.25cm}
\makegapedcells
\setcellgapes{3pt}
\begin{tabular}{c|c|cccccc}
  \hline
  $\lambda$ & Algo & iter & time & $f$ & $\|\eta_{z_k}\|$ & sparsity & variance \\
  \hline
2.0 & SOC         & 5413 & 134.94 & $-9.43_{3}$ & $\backslash$ & 0.10 & 0.98 \\
2.0 & ManPG-Ada   & 1532 & 6.43 & $-9.43_{3}$ & $1.09_{-4}$ & 0.11 & 0.98 \\
2.0 & ManPG-Ada-D & 146 & 0.87 & $-9.43_{3}$ & $3.24_{-4}$ & 0.10 & 0.98 \\
2.0 & AManPG      & 101 & 0.81 & $-9.43_{3}$ & $1.11_{-4}$ & 0.10 & 0.98 \\
2.0 & AManPG-D    & 66 & 0.73 & $-9.43_{3}$ & $2.61_{-4}$ & 0.10 & 0.98 \\
  \hline
6.0 & SOC         & 2000 & 51.90 & $-7.74_{3}$ & $\backslash$ & 0.29 & 0.96 \\
6.0 & ManPG-Ada   & 431 & 2.66 & $-7.74_{3}$ & $3.08_{-4}$ & 0.29 & 0.96 \\
6.0 & ManPG-Ada-D & 180 & 1.57 & $-7.74_{3}$ & $8.14_{-4}$ & 0.29 & 0.96 \\
6.0 & AManPG      & 106 & 1.45 & $-7.74_{3}$ & $2.68_{-4}$ & 0.29 & 0.96 \\
6.0 & AManPG-D    & 56 & 1.13 & $-7.74_{3}$ & $5.13_{-4}$ & 0.29 & 0.96 \\
  \hline
10.0 & SOC         & 1516 & 38.01 & $-6.21_{3}$ & $\backslash$ & 0.43 & 0.94 \\
10.0 & ManPG-Ada   & 144 & 1.36 & $-6.21_{3}$ & $4.58_{-4}$ & 0.43 & 0.93 \\
10.0 & ManPG-Ada-D & 50 & 0.90 & $-6.21_{3}$ & $1.20_{-3}$ & 0.43 & 0.93 \\
10.0 & AManPG      & 66 & 1.37 & $-6.21_{3}$ & $2.02_{-4}$ & 0.43 & 0.94 \\
10.0 & AManPG-D    & 41 & 1.30 & $-6.21_{3}$ & $8.11_{-4}$ & 0.43 & 0.93 \\
  \hline
\end{tabular}
\end{center}
\end{table}

\begin{table}[ht!]
\begin{center}
\caption{An average result of 20 random runs for the synthetic data: $\whfirrev{}{p} = 5$, $n = 4000$ and $m = 400$. The subscript $k$ indicates a scale of $10^{k}$.}
\label{table5}
\vspace{0.25cm}
\makegapedcells
\setcellgapes{3pt}
\begin{tabular}{c|c|cccccc}
  \hline
  $\lambda$ & Algo & iter & time & $f$ & $\|\eta_{z_k}\|$ & sparsity & variance \\
  \hline
1.0 & SOC         & 529 & 9.05 & $-3.64_{2}$ & $\backslash$ & 0.61 & 0.95 \\
1.0 & ManPG-Ada   & 41 & 0.11 & $-3.64_{2}$ & $3.17_{-4}$ & 0.61 & 0.95 \\
1.0 & ManPG-Ada-D &  24 & 0.08 & $-3.64_{2}$ & $3.97_{-4}$ & 0.61 & 0.95 \\
1.0 & AManPG      &  41 & 0.16 & $-3.64_{2}$ & $2.09_{-4}$ & 0.61 & 0.95 \\
1.0 & AManPG-D    &  31 & 0.14 & $-3.64_{2}$ & $2.64_{-4}$ & 0.61 & 0.95 \\
  \hline
1.5 & SOC         & 412 & 7.29 & $-2.99_{2}$ & $\backslash$ & 0.74 & 0.93 \\
1.5 & ManPG-Ada   & 37 & 0.10 & $-2.99_{2}$ & $4.51_{-4}$ & 0.74 & 0.93 \\
1.5 & ManPG-Ada-D & 19 & 0.08 & $-2.99_{2}$ & $4.44_{-4}$ & 0.74 & 0.93 \\
1.5 & AManPG      & 33 & 0.15 & $-2.99_{2}$ & $2.94_{-4}$ & 0.74 & 0.93 \\
1.5 & AManPG-D    & 25 & 0.13 & $-2.99_{2}$ & $3.59_{-4}$ & 0.74 & 0.93 \\
  \hline
2.0 & SOC         & 375 & 6.36 & $-2.39_{2}$ & $\backslash$ & 0.80 & 0.91 \\
2.0 & ManPG-Ada   & 46 & 0.11 & $-2.39_{2}$ & $5.85_{-4}$ & 0.80 & 0.91 \\
2.0 & ManPG-Ada-D & 17 & 0.07 & $-2.39_{2}$ & $6.37_{-4}$ & 0.80 & 0.91 \\
2.0 & AManPG      & 33 & 0.14 & $-2.39_{2}$ & $3.82_{-4}$ & 0.80 & 0.91 \\
2.0 & AManPG-D    & 23 & 0.12 & $-2.39_{2}$ & $3.92_{-4}$ & 0.80 & 0.91 \\
  \hline
\end{tabular}
\end{center}
\end{table}

\whcommtrd{}{It can be seen from the tables that the SOC method takes the most computational time to achieve a similar accuracy.}
In addition, the tables show that AManPG shares the same fast convergence as the Euclidean FISTA method in terms of the number of iterations.
Note that the additional computations on the safeguard, the  retraction, as well as the inverse of retraction make the  per iteration cost of AManPG higher than that of ManPG-Ada. 
Despite this, due to the significant reduction on the number of iterations, AManPG is still substantially faster than ManPG-Ada in terms of the computational time for the random data and the real DNA data (see Tables~\ref{table3} and \ref{table4}).  
For the synthetic data, Table~\ref{table5} suggests this problem is relatively easier in the sense that all the algorithms are able to achieve the convergence within a small number of iterations. Thus, the two AManPG algorithms do not exhibit the 
the computational advantage in terms of the runtime due to the additional costs in each iteration.
Moreover, it is   evident  that using the diagonal weight significantly improves the efficiency of ManPG, ManPG-Ada, and AManPG both in terms of the number of iterations and in terms of the computational time.

\subsection{Efficiency for large-scale problems}

\whfirrev{}{
In this section, the efficiency of the representative method, AManPG-D, is shown for multiple values of $n, m, p$. The values of $\lambda$ are tuned such that the solutions have reasonable sparsities. 
As shown in Table~\ref{table6}, the trend of the computational time roughly follows the complexity analysis discussed at the end of Section~\ref{sec:SSN}. Moreover, AManPG-D exhibits high efficiency for the sparse PCA model~\eqref{eq:sparsePCA} in the sense that it is able to solve the problem with $n = 96000, m = 1280, p = 16$ within half a minute.
}

\begin{table}[ht!]
\begin{center}
\caption{Performance of AManPG-D with multiple values of $n$, $m, p, \lambda$. An average result of 10 random runs for the random data. The subscript $k$ indicates a scale of $10^{k}$.}
\label{table6}
\vspace{0.25cm}
\makegapedcells
\setcellgapes{3pt}
\begin{tabular}{c|cccccccc}
  \hline
  $n$ & 3000 & 6000 & 6000 & 6000 & 12000 & 24000 & 48000 & 96000 \\
  $m$ & 40 & 40 & 80 & 80 & 160 & 320 & 640 & 1280 \\
  $p$ & 4 & 4 & 4 & 8 & 8 & 8 & 16 & 16 \\
  $\lambda$ & 3 & 3 & 2 & 2 & 1.5 & 1 & 0.75 & 0.5 \\
  \hline
iter              & 121 & 123 & 139 & 189 & 175 & 146 & 98 & 55 \\                                                                  
time              & 0.25 & 0.37 & 0.46 & 1.34 & 2.72 & 4.40 & 18.97 & 29.32 \\                                                      
$f$               & $2.46_{1}$ & $-7.83_{1}$ & $1.06_{1}$ & $2.70_{1}$ & $4.38_{1}$ & $1.52_{1}$ & $8.02_{1}$ & $2.71_{1}$ \\       
$\|\eta_{z_k}\|$  & $5.42_{-4}$ & $7.65_{-4}$ & $8.07_{-4}$ & $1.75_{-3}$ & $2.65_{-3}$ & $3.88_{-3}$ & $1.17_{-2}$ & $1.55_{-2}$ \\
sparsity          & 0.85 & 0.57 & 0.76 & 0.78 & 0.84 & 0.77 & 0.84 & 0.76 \\                                                        
variance          & 0.43 & 0.80 & 0.60 & 0.58 & 0.47 & 0.59 & 0.48 & 0.60 \\ 
  \hline
\end{tabular}
\end{center}
\end{table}

\whcommsec{[Figure~\ref{figure4} is interesting but may be not important in this paper. You can remove it if you want. ]}{}  \kwcomm{[Prefer to keep this]}{}

\subsection{Compare with sparse PCA models in GPower}

For the synthetic data, because there exists a ground truth, it is favorable to present the principal components returned by the PCA and that returned by the Riemannian proximal methods for the sparse PCA formulation, see Figure~\ref{figure4} (only the principal components obtained from AManPG-D are reported as a representative). The figure clearly shows that the latter one is  more likely to capture the sparse structure of the loading vector.

We have also compared AManPG-D and the GPower with $l_1$ and $l_0$ norms (designed for a
different sparse PCA model, see the work of Journ\'ee \cite{JoNeRiSe2010a}) with various sparsity levels. The results are presented in Figures~\ref{figure1}, \ref{figure2} and \ref{figure3} for the random data, the real DNA data and the synthetic data, respectively. We can see that AManPG-D for \eqref{eq:sparsePCA} produces
 an orthonormal loading matrix while does not lose much variance compared with GPower.


\begin{figure}[ht!]
\hspace{-0.05\textwidth}\includegraphics[width=1.1\textwidth]{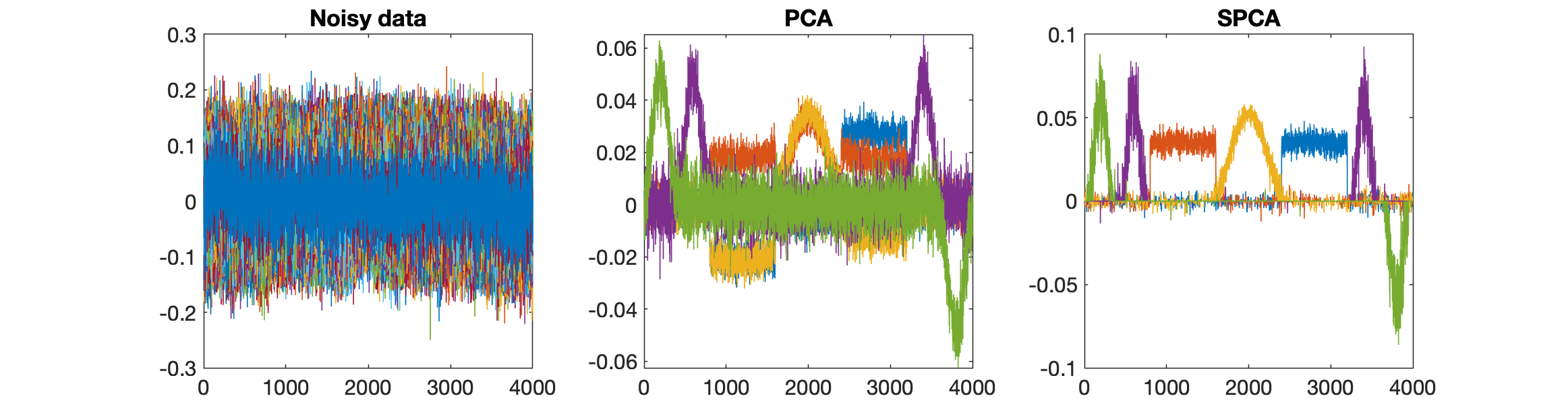}
\vspace{-1em}
\caption{Comparison between the principal components from the PCA and that from the Sparse PCA by AManPG-D with $\lambda = 1.5$.
}
\label{figure4}
\end{figure}

\begin{figure}[ht!]
\hspace{-0.05\textwidth}\includegraphics[width=1.1\textwidth]{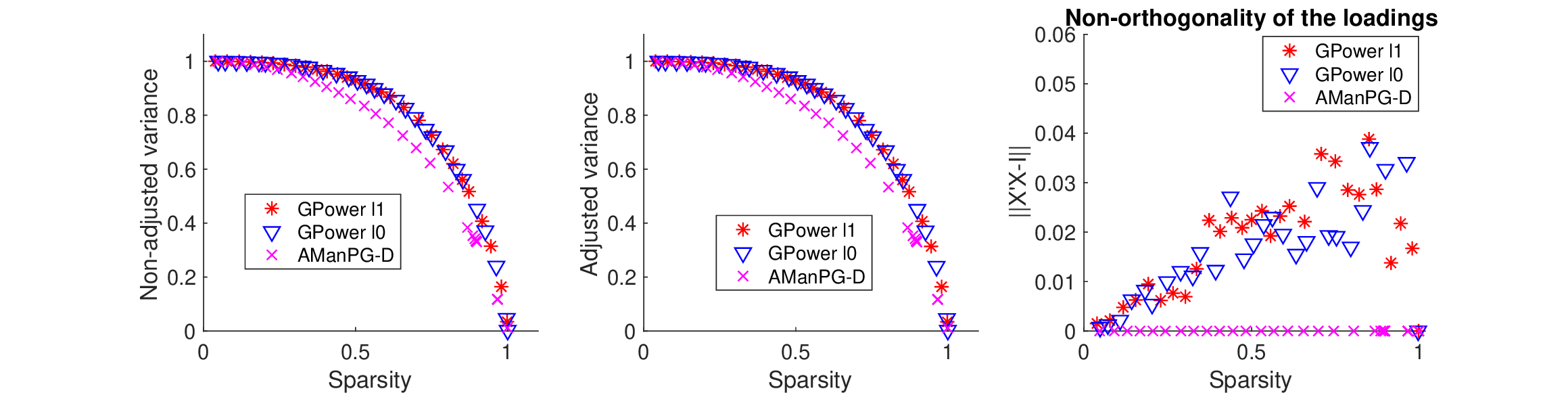}
\vspace{-1em}
\caption{Sparse PCA by AManPG-D and sparse PCA by GPower. Matrix $A \in \mathbb{R}^{3000 \times 40}$ is generated randomly. The number of components $\whfirrev{}{p}$ is set to be $4$.
}
\label{figure1}
\end{figure}

\begin{figure}[ht!]
\hspace{-0.05\textwidth}\includegraphics[width=1.1\textwidth]{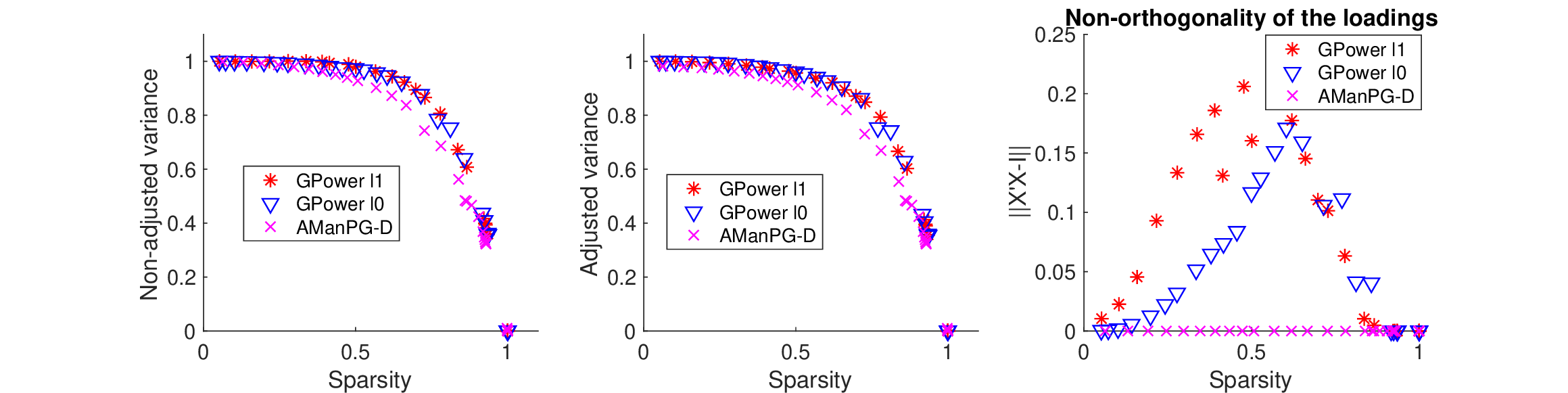}
\vspace{-1em}
\caption{Sparse PCA by AManPG-D and sparse PCA by GPower. Matrix $A \in \mathbb{R}^{24589 \times 113}$ is from the DNA methylation data. The number of components $\whfirrev{}{p}$ is set to be $4$.
}
\label{figure2}
\end{figure}

\begin{figure}[ht!]
\hspace{-0.05\textwidth}\includegraphics[width=1.1\textwidth]{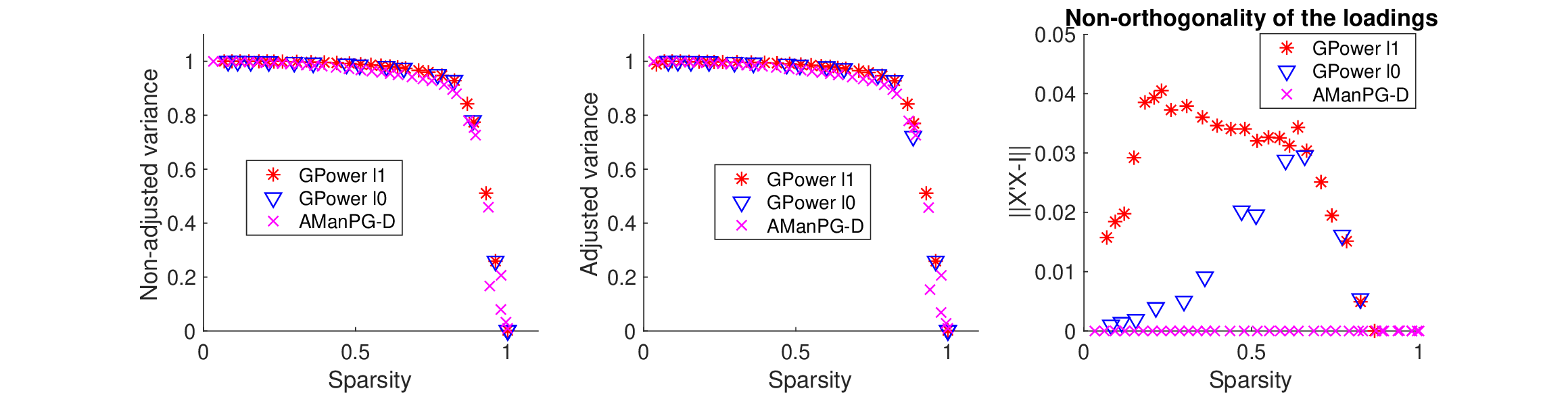}
\vspace{-1em}
\caption{Sparse PCA by AManPG-D and sparse PCA by GPower. Matrix $A \in \mathbb{R}^{4000 \times 400}$ is from the synthetic data. The number of components $\whfirrev{}{p}$ is set to be $5$.
}
\label{figure3}
\end{figure}

\whcommsec{[Conjuncture: In Figures~\ref{figure1},~\ref{figure2}, and~\ref{figure3}, I think the fact that increasing the sparsity reduces the variance less significantly implies that the extracting the principal components is easier. Therefore, probably the optimization is also easier. ]}{} \kwcomm{[Have done nothing about this point]}{} \whcommsec{[Just a remark for myself. No need to do anything.]}{}


\section{Conclusion and future directions}\label{sec:con}

In this paper we extend the well-known accelerated first order method FISTA from the Euclidean setting to the Riemannian setting. Moreover,  a diagonal preconditioning strategy is also presented which can further accelerate the convergence of the Riemannian proximal gradient methods. Empirical evaluations on the sparse PCA problems have established the computational advantages of the proposed methods. Stationary point convergence of the algorithm has been carefully justified.

There are several lines of research for future directions. In addition to the stationary point analysis, it is also desirable to study the local convergence rate of the Riemannian proximal methods.  It is also interesting to develop and study the high order Riemannian methods for the nonsmooth Riemannian optimization problems with the splitting structure. For example,  in this paper we only use the diagonal weight to accelerate the convergence of the algorithms for the computational efficiency of the Riemannian proximal subproblem. It is very natural to further consider the Newton type method for this kind of problems. In this case the crux would be to develop efficient algorithms for the scaled proximal mapping on the tangent space. In the work of Li \etal \cite{LiSunToh18} a highly efficient semi-smooth Newton augmented Lagrangian method is proposed for the Lasso problem. Due to the similar structures between the Lasso problem and the sparse PCA problem, it is intriguing to see whether or not the method can be extended to solve the sparse PCA problem.

%

\section*{acknowledgements}
We would like to thank Shiqian Ma for kindly sharing their codes with us, and thank Xudong Li for the helpful discussion regarding to the semi-smooth Newton method. This study does not have any conflicts to disclose.

\bibliographystyle{alpha}
\bibliography{WHlibrary,ref}

\appendix

\section{Proof of Lemma~\ref{le2}} \label{app1}

\begin{proof}
We first prove the result of the work~\cite[Lemma~5.1]{CMSZ2019}. Note that the proof is slightly different due to the presence of the weight matrix $W$.

Let $\ell_{\tilde{z}_i}(\xi) = \inner[]{ \grad f( \tilde{z}_i ) }{\xi_{\tilde{z}_i}} + \frac{1}{2 \mu} \|\xi\|_{W_{\tilde{z}_i}}^2 + g( \tilde{z}_i + \xi )$. 
Define functions $\tilde{\ell}_{\tilde{z}_i} (\xi) = \langle W_{\tilde{z}_i}^{-1/2} \xi_{\tilde{z}_i}, \xi \rangle+\frac{1}{2\mu}\| \xi \|_F^2 + g(x+ W_{\tilde{z}_i}^{-1/2} \xi)$ and $\theta(\xi) = W_{\tilde{z}_i}^{1/2} \xi$. We have $\ell_{\tilde{z}_i} = \tilde{\ell}_{\tilde{z}_i} \circ \theta$.
By $\frac{1}{\mu}$-strongly convexity of $\tilde{\ell}_{\tilde{z}_i}$, we have
\[
\tilde{\ell}_{\tilde{z}_i} ( \hat{\xi} ) \geq \tilde{\ell}_{\tilde{z}_i} ( {\xi} ) + \inner[]{ \partial \tilde{\ell}_{\tilde{z}_i} ( {\xi} ) }{ \hat{\xi} - \xi } + \frac{1}{2 \mu} \|\hat{\xi} - \xi\|_F^2, \quad \forall \hat{\xi}, \xi \in \mathbb{R}^{n \times p},
\]
which together with the full rank of $W_{\tilde{z}_i}$ yields
\begin{equation} \label{e20}
\ell_{\tilde{z}_i}(\hat{\xi}) \geq \ell_{\tilde{z}_i}(\xi) + \inner[]{\partial \ell_{\tilde{z}_i}( \xi )}{ \hat{\xi} - \xi } + \frac{1}{2 \mu} \| \hat{\xi} - \xi \|_{W_{\tilde{z}_i}}^2 \quad \forall \hat{\xi}, \xi \in \mathbb{R}^{n \times p}.
\end{equation}
By the definition of $\eta_{\tilde{z}_i}$ in Step~\ref{alg:Safeguard:st1} of Algorithm~\ref{alg:Safeguard} and the optimality condition, we have $0 \in \P_{\T_{\tilde{z}_i}} \partial \ell_{\tilde{z}_i} (\eta_{\tilde{z}_i})$. It follows from~\eqref{e20} that
\[
\ell_{\tilde{z}_i}( 0 ) \geq \ell_{\tilde{z}_i}(\eta_{\tilde{z}_i}) + \frac{1}{2 \mu} \| \eta_{\tilde{z}_i} \|_{W_{\tilde{z}_i}}^2,
\]
which implies
\[
g( \tilde{z}_{i} ) \geq \inner[]{ \grad f( \tilde{z}_i ) }{ \eta_{\tilde{z}_i} } + \frac{1}{2 \mu} \| \eta_{\tilde{z}_i} \|_{W_{\tilde{z}_i}}^2 + g( \tilde{z}_i + \eta_{\tilde{z}_i} ) + \frac{1}{2 \mu} \| \eta_{\tilde{z}_i} \|_{W_{\tilde{z}_i}}^2.
\]
By the convexity of $g$, we have
\[
g( \tilde{z}_{i} + \alpha \eta_{\tilde{z}_i} ) - g( \tilde{z}_{i} ) = g( \alpha (\tilde{z}_{i} + \eta_{\tilde{z}_i}) + (1-\alpha) \tilde{z}_{i} ) - g(\tilde{z}_{i}) \leq \alpha ( g( \tilde{z}_{i} + \eta_{\tilde{z}_i} ) - g( \tilde{z}_{i} ) ) \quad \forall \alpha \in [0, 1].
\]
Combining the above two inequalities yields
\begin{equation}\label{e21}
\ell_{\tilde{z}_i}(\alpha \eta_{\tilde{z}_i}) - \ell_{\tilde{z}_i}( 0 ) \leq \frac{\alpha(\alpha - 2)}{2 \mu} \| \eta_{\tilde{z}_i} \|_{W_{\tilde{z}_i}}^2 \leq \frac{\alpha(\alpha - 2) {\kappa}}{2 \mu} \| \eta_{\tilde{z}_i} \|_{F}^2,
\end{equation}
where the last inequality follows from Assumption~\ref{as4}.

Using~\eqref{e21}, the first item of Lemma~\ref{le2} can be obtained by exactly following the steps in the work~\cite[Lemma~5.2]{CMSZ2019}.

By the definition of $\eta_{\tilde{z}_i}$ in Step~\ref{alg:Safeguard:st1} of Algorithm~\ref{alg:Safeguard} and the first order optimality condition, we have
\begin{equation} \label{e22}
0 \in \frac{1}{\mu} W_{\tilde{z}_i} \eta_{\tilde{z}_i} + \grad f(\tilde{z}_i) + \P_{\T_{\tilde{z}_i} \mathcal{M}} \partial g(\tilde{z}_i + \eta_{\tilde{z}_i}).
\end{equation}
If $\eta_{\tilde{z}_i} = 0$, then it follows from~\eqref{e22} that
\[
0 \in \grad f(\tilde{z}_i) + \P_{\T_{\tilde{z}_i} \mathcal{M}} \partial g(\tilde{z}_i + \eta_{\tilde{z}_i}),
\]
which is exactly the first order optimality condition of Problem~\eqref{eq:optimization}. This completes the proof for the second item of Lemma~\ref{le2}.
\end{proof}

\end{document}